\newif \ifshowvc
\DeclareMathOperator{\GL}{GL}
\DeclareMathOperator{\SL}{SL}
\renewcommand{\tilde}{\widetilde}
\newcommand{\C}{\mathbb{C}}
\newcommand{\Z}{\mathbb{Z}}
\newcommand{\ra}{\rightarrow }
\newcommand{\cL}{\mathcal{L}}
\newcommand{\OO}{{\mathcal {O}}}
\newcommand{\p}{{\mathfrak {p}}}
\newcommand{\la}{\lambda }
\newcommand{\sgn}{{\mathrm {sgn}}}
\newcommand{\Dem}{{\mathcal {D}}}
\newcommand{\T}{{\mathcal {T}}}
\newcommand{\W}{{\mathcal {W}}}
\newcommand{\Ts}{\tilde{\mathcal {T}}}
\newcommand{\tDelta}{{\Delta _v}}
\newcommand{\RR}{\tilde{\mathcal {R}}}
\newcommand{\St}{\tilde{\mathcal {S}}}
\newcommand{\bG}{\mathbf{G}}
\newcommand{\A}{\tilde{\mathcal {A}}}
\newcommand{\polyring}{\mathcal {A}}
\newcommand{\ffield}{\mathcal {K}}
\newtheorem{lemma}{Lemma}
\newtheorem{prop}[lemma]{Proposition}
\newtheorem{theorem}[lemma]{Theorem}
\newtheorem{corollary}[lemma]{Corollary}
\theoremstyle{definition}
\newtheorem{remark}{Remark}
\author{Gautam Chinta, Paul E. Gunnells, Anna Pusk\'as}
\address{Department of Mathematics,
The City College of New York,
New York, NY 10031, USA
}
\email{chinta@sci.ccny.cuny.edu}
\address{Department of Mathematics and Statistics,
University of Massachusetts,
Amherst, MA 01003, USA
}
\email{gunnells@math.umass.edu}
\address{Department of Mathematics,
Columbia University,
New York, NY 10027, USA
}
\email{apuskas@math.columbia.edu}
\title{Metaplectic Demazure 
  operators and Whittaker functions}
\date{August 22, 2014}
\begin{document}
\begin{abstract}
In \cite{cg-jams} the first two named authors defined an action of a
Weyl group on rational functions and used it to construct multiple
Dirichlet series.  These series are related to Whittaker functions on
an $n$-fold metaplectic cover of a reductive group.  In this paper, we
define metaplectic analogues of the Demazure and Demazure-Lusztig
operators.  We show how these operators can be used to recover the
formulas from \cite{cg-jams}, and how, together with results of
McNamara \cite{mcnamara2}, they can be used to compute Whittaker
functions on metaplectic groups over $p$-adic fields.
\end{abstract}

\subjclass[2010]{Primary 22E50; Secondary 11F68}
\keywords{Whittaker functions, metaplectic groups, Demazure character
formula, Weyl group multiple Dirichlet series}

\maketitle

% \ifshowvc
% \let\thefootnote\relax
% \footnotetext{Base revision~\GITAbrHash, \GITAuthorDate,\GITAuthorName.}
% \fi

\section{Introduction}

The Casselman-Shalika formula is an explicit formula for the values of
the spherical Whittaker functions associated to an unramified
principal series representation of a reductive group over a
non-archimedian local field $F$ \cite{casselman-shalika-cs2},
generalizing earlier work of Shintani \cite{shintani}.  This
has proven to be an important tool in the study of automorphic forms,
and in particular, in the construction of $L$-functions.  Similarly,
the \emph{metaplectic} Casselman-Shalika formula is relevant to the
study of certain Dirichlet series in several complex variables that
are expected to be the global Whittaker functions of Eisenstein series
on metaplectic covers of reductive groups.

Three related but distinct approaches to generalizing the
Casselman-Shalika formula to the nonlinear setting have recently
emerged.  The first is found in work of Brubaker-Bump-Friedberg
\cite{bbf-annals}.  Working over a global field and building on
earlier work with Hoffstein \cite{bbfh-wmd3}, these authors compute
the Whittaker functions of the Borel Eisenstein series on a
metaplectic cover of $\SL_r$.  A recursion relating Whittaker
functions on a cover of $\SL_{r}$ to those on $\SL_{r-1}$ plays a key
role in their proof.  They show that though the Whittaker functions
are not Euler products, they do satisfy a certain twisted
multiplicativity that reduces their specification to a description of
their $p$-parts, for $p$ a prime.  These $p$-parts are then shown to
be expressible in terms of sums over a crystal base.

Second, the work of McNamara \cite{mcnamara} treats metaplectic covers
$\tilde{G}$ of a simply-connected Chevalley group $G$ over a local
field.  He directly computes the spherical Whittaker function by
integrating the spherical vector $\varphi_{K} $ over the (opposite)
unipotent subgroup $U^{-}$.  McNamara defines a decomposition of
$U^{-}$ into a collection of disjoint subsets in bijection with the
(infinite) crystal graph $B (-\infty)$; on each subset the integrand
$\varphi_{K}$ is constant.  This proves that the Whittaker function
can be realized as a sum over a crystal base.  When $G=\SL_{r}$, he
recovers the formulas of Brubaker-Bump-Friedberg-Hoffstein.

Finally, a third approach appears in the work of Chinta-Offen
\cite{co-cs}.  This expresses the $p$-adic Whittaker functions on a
metaplectic cover of $\GL_r$ over a $p$-adic field as a sum over the Weyl
group.  This approach has since been generalized by McNamara
\cite{mcnamara2} to the context of tame covers of unramified reductive
groups over a local field.  The formulas in these works involve a
``metaplectic'' action of the Weyl group on rational functions.  This
action, which has its origins in Kazhdan-Patterson's seminal
investigation of automorphic forms on metaplectic covers of $\GL_{r}$
\cite{MR743816}, was used by two of us (GC and PG) to construct
\emph{Weyl group multiple Dirichlet series} \cite{cg-quadratic,
  cg-jams}.  These are infinite series in several complex variables
analogous to the classical Dirichlet series in one variable, such as
the Riemann $\zeta$ function and Dirichlet $L$-functions.  They
satisfy a group of functional equations isomorphic to the Weyl group
that intermixes the variables.  A consequence of the works
\cite{co-cs, mcnamara2} is that the $p$-adic metaplectic Whittaker
functions coincide with the local factors of these series
(cf.~\S\ref{sec:whittaker}).

It is the formulas arising in the third approach that concern us in
this article, which is partially motivated by connections between
Whittaker functions and the geometry and combinatorics of Schubert
varieties.  In the nonmetaplectic case, that Whittaker functions on
$G/F$ are related to the geometry of the flag variety $X$ attached to
the complex dual group $\hat G (\C)$ has been recently elucidated by
Brubaker-Bump-Licata \cite{bbl-demazure}, following earlier work of
Reeder \cite{reeder}.  In particular, recall that if $S\subset X$ is a
Schubert variety and $\cL$ is a line bundle on $X$ with global
sections, then the space $H^{0} (S, \cL)$ is a $\hat T (\C)$-module,
where $\hat T (\C)\subset \hat G (\C)$ is a maximal torus.  The
character of such a module is called a \emph{Demazure character}, and
can be computed by applying \emph{Demazure operators} to a highest
weight monomial \cite{demazure,andersen}.  Then Brubaker-Bump-Licata
prove (among other results) that the Iwahori Whittaker functions
become Demazure characters when $q^{-1}\rightarrow 0$, where $q$ is
the cardinality of the residue field.

To generalize results of \cite{bbl-demazure} to the metaplectic case,
a first step is developing a metaplectic analogue of the Demazure
character formula. (Demazure's version of the Weyl character formula
appears in \cite{Fulton_Young}.)  This is accomplished in the present
paper.  We define metaplectic Demazure and \emph{Demazure-Lusztig}
operators using the metaplectic Weyl group action found in
\cite{cg-jams, cg-quadratic}.  We prove that these operators satisfy
the same relations as their classical counterparts.  We also prove an
analogue of Demazure's version of the Weyl character formula
(corresponding to the case $S=X$ above) (Theorem \ref{thm:long_word}),
as well as a companion identity for the Demazure-Lustig operators
(Theorem \ref{thm:T_sum}) and show how they can be used to compute
spherical Whittaker functions on metaplectic covers (Theorem
\ref{thm:demazure_whittaker}).

\medskip \noindent \textbf{Acknowledgments.} We thank Dan Bump,
Cristian Lenart and Peter McNamara for helpful conversations.  We
thank the NSF for partially supporting this work through grants DMS
0847586 (GC) and DMS 1101640 (PG).

\section{Notation}\label{sec:notation} We begin by setting up
notation.  For unexplained notions about root systems and Coxeter
groups, we refer to \cite{bourbaki}.

Let $\Phi$ be an irreducible reduced root system of rank $r$ with Weyl
group $W$. Choose an ordering of the roots and let $\Phi = \Phi^+ \cup
\Phi^{-}$ be the decomposition into positive and negative roots.  Let
$\{\alpha_1, \alpha_2, \ldots, \alpha_r\}$ be the set of simple roots,
and let $\sigma_i$ be the Weyl group element corresponding to the
reflection through the hyperplane perpendicular to $\alpha_i$.  Define 
\begin{equation}
  \label{def:Phi_w} \Phi(w)=\{\alpha\in\Phi^+: w(\alpha)\in\Phi^-\}.
\end{equation}

Let $\Lambda$ be a lattice containing $\Phi$ as a subset.  Later
(Section \ref{sec:whittaker}) we will assume that $\Lambda$ is the
coweight lattice of a split reductive algebraic group $G$ defined over
the non-archimedean field $F$, and $\Phi$ is its coroots, but at the
moment this is not necessary.  Right now all we require is that the
Weyl group $W$ acts on $\Lambda$, and that there is a $W$-invariant
$\Z$-valued quadratic form $Q$ defined on $\Lambda$.  Define a
bilinear form $B (\alpha ,\beta)$ by $Q (\alpha +\beta) - Q (\alpha)-Q
(\beta)$.

We fix a positive integer $n$.  The integer $n$ determines a
collection of integers $\{m (\alpha) : \alpha \in \Phi\}$ by
\begin{equation}\label{eqn:defofm}
  m(\alpha)=n/\gcd(n, Q (\alpha )),
\end{equation}
and a sublattice $\Lambda_0\subset \Lambda$ by 
\begin{equation}\label{eqn:defoflambda0}
\Lambda_{0} = \{\lambda \in \Lambda : \text{$B (\alpha ,\lambda) \equiv 0
\bmod n$ for all simple roots $\alpha$}\}.
\end{equation}
With these definitions, one can easily prove the following:
\begin{lemma}
  For any simple root $\alpha$, we have $m(\alpha)\alpha\in
  \Lambda_0$. \qed
\end{lemma}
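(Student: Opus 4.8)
The plan is to verify directly that $m(\alpha)\alpha$ satisfies the defining congruence in~\eqref{eqn:defoflambda0}, namely that $B(\beta, m(\alpha)\alpha) \equiv 0 \bmod n$ for every simple root $\beta$. By bilinearity this equals $m(\alpha) B(\beta,\alpha)$, so the task reduces to showing $n \mid m(\alpha) B(\beta,\alpha)$ for all simple $\beta$. First I would unwind the definition $m(\alpha) = n/\gcd(n, Q(\alpha))$, writing $d = \gcd(n, Q(\alpha))$ so that $m(\alpha) = n/d$ and $n = d\, m(\alpha)$. Then $n \mid m(\alpha) B(\beta,\alpha)$ is equivalent to $d \mid B(\beta,\alpha)$.

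The heart of the argument is therefore the claim that $\gcd(n, Q(\alpha))$ divides $B(\beta,\alpha)$ for any simple roots $\alpha,\beta$. Since $d \mid Q(\alpha)$, it suffices to show $Q(\alpha) \mid B(\beta,\alpha)$, or at least that $\gcd(n,Q(\alpha))$ does. Here I would invoke the standard normalization facts about $Q$ and $B$ on a root system: because $Q$ is $W$-invariant and $\Z$-valued, the associated bilinear form satisfies $B(\beta,\alpha) = \langle \beta, \alpha^\vee \rangle\, Q(\alpha)$ — that is, $B(\beta,\alpha)/Q(\alpha)$ is the integer pairing of $\beta$ against the coroot of $\alpha$ (this is the usual relation $B(\beta,\alpha) = 2Q(\alpha)\,(\beta,\alpha)/(\alpha,\alpha)$ when $Q$ is a suitable multiple of the Killing-type form, and $2(\beta,\alpha)/(\alpha,\alpha) \in \Z$ is a Cartan integer). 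Consequently $Q(\alpha) \mid B(\beta,\alpha)$, hence $d \mid B(\beta,\alpha)$, hence $n = d\,m(\alpha) \mid m(\alpha) B(\beta,\alpha) = B(\beta, m(\alpha)\alpha)$, which is exactly the condition for $m(\alpha)\alpha \in \Lambda_0$.

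The step I expect to be the main obstacle — or at least the one requiring the most care — is justifying the identity $B(\beta,\alpha) = \langle \beta,\alpha^\vee\rangle\, Q(\alpha)$ with the conventions in force. In the fully general setup described in the excerpt, $Q$ is merely \emph{some} $W$-invariant $\Z$-valued quadratic form on $\Lambda$, not necessarily the one coming from a Killing form; one must check that $W$-invariance alone forces $B(\beta,\alpha)$ to be divisible by $Q(\alpha)$. The key input is that $\sigma_\alpha(\beta) = \beta - \langle \beta,\alpha^\vee\rangle\,\alpha$ with $\langle\beta,\alpha^\vee\rangle \in \Z$, combined with $Q(\sigma_\alpha(\beta)) = Q(\beta)$; expanding $Q(\beta - \langle\beta,\alpha^\vee\rangle\,\alpha)$ using $B$ and cancelling $Q(\beta)$ yields $\langle\beta,\alpha^\vee\rangle\, B(\beta,\alpha) = \langle\beta,\alpha^\vee\rangle^2\, Q(\alpha)$, so $B(\beta,\alpha) = \langle\beta,\alpha^\vee\rangle\, Q(\alpha)$ whenever $\langle\beta,\alpha^\vee\rangle \neq 0$, and the case $\langle\beta,\alpha^\vee\rangle = 0$ gives $B(\beta,\alpha) = 0$ directly by a similar computation (or is immediate). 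Either way $Q(\alpha) \mid B(\beta,\alpha)$, which closes the argument. The remaining bookkeeping — bilinearity of $B$, the divisibility chase with $d$ and $m(\alpha)$ — is routine.
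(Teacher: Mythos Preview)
Your argument is correct and is exactly the computation the paper has in mind; the paper itself offers no proof at all (the lemma is stated with an immediate \qed), so you have simply supplied the omitted details. One small point worth tightening: in the case $\langle\beta,\alpha^\vee\rangle = 0$ the identity $c\,B(\beta,\alpha)=c^{2}Q(\alpha)$ becomes vacuous, but applying your same $W$-invariance computation to $\lambda=\beta+\alpha$ (for which $\langle\lambda,\alpha^\vee\rangle=2\neq 0$) and subtracting gives $B(\beta,\alpha)=\langle\beta,\alpha^\vee\rangle\,Q(\alpha)$ unconditionally, so the divisibility $Q(\alpha)\mid B(\beta,\alpha)$ holds in every case.
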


Let $\polyring=\C[\Lambda]$ be the ring of Laurent polynomials on
$\Lambda$ and $\ffield$ its field of
fractions. The action of $W$ on the lattice $\Lambda$
induces an action of $W$ on $\ffield$: we put
\begin{equation}
  \label{eq:W_on_K}
  (w, x^\lambda)\longmapsto x^{w\lambda} =:w.x^{\lambda },
\end{equation}
and then extend linearly and multiplicatively to all of $\ffield$.
We will always denote this
action using the lower dot 
\[
(w,f)\longmapsto w.f
\]
to distinguish it from the metaplectic $W$-action on $\ffield$
constructed below in \eqref{eq:metaWnotation}.  

Let $\lambda \mapsto \bar \lambda $ be the projection $\Lambda
\rightarrow \Lambda /\Lambda_{0}$ and $(\Lambda/\Lambda_0)^*$ be the
group of characters of the quotient lattice.
Any $\xi\in (\Lambda/\Lambda_0)^*$ induces a field isomorphism of
$\ffield/\C$ by setting $\xi(x^\lambda)=\xi(\bar\lambda)\cdot x^\lambda$
for $\lambda\in\Lambda.$ This leads to the direct sum decomposition
\begin{equation}
  \label{eq:direct_sum}
  \ffield=\bigoplus_{\bar\lambda\in \Lambda/\Lambda_0}
  \ffield_{\bar\lambda}
\end{equation}
where $\ffield_{\bar\lambda}=\{f\in\ffield: \xi(f)=\xi(\bar\lambda)\cdot f
\mbox{ for all $\xi\in(\Lambda/\Lambda_0)^* $}\}$

Next choose nonzero complex parameters $v,g_0, \ldots, g_{n-1}$
satisfying
\begin{equation}
  \label{eq:qt}
  g_0=-1 \mbox{  and  } g_ig_{n-i}=v^{-1}\mbox{  for  } i=1,\ldots, n-1;
\end{equation}
for all other $j$ we define $g_{j}:=g_{r_{n} (j)}$,
where $0\leq r_{n} (j)<n-1$ denotes the remainder upon dividing $j$ by $n$.
Introduce the following deformation of the Weyl denominator:
\begin{equation*}
  \tDelta =\prod _{\alpha \in \Phi ^{+}} 
  \bigl(1-v\cdot x^{m(\alpha )\alpha }\bigr).
\end{equation*}
If $v=1$ we write more simply $\Delta_v=\Delta.$

We now define an action of the Weyl group $W$ on $\ffield$ as
follows.  For $f\in \ffield_{\bar\lambda}$ and $\sigma_\alpha\in W$ the
generator corresponding to a simple root $\alpha$, define

\begin{equation}
  \label{def:action}
  \begin{split}
\sigma _i(f)=\frac{\sigma _i. f}{1-vx^{m(\alpha _i)\alpha _i}}&\cdot 
\left[x^{-r_{m(\alpha _i)}\left(-\frac{B(\la,\alpha _i)}{Q(\alpha
        _i)}\right)\cdot \alpha _i}\cdot (1-v)\right.\\
&\left. {}-v\cdot g_{Q(\alpha _i)-B(\la ,\alpha _i)}\cdot 
x^{(1-m(\alpha _i))\alpha _i}\cdot (1-x^{m(\alpha _i)\alpha _i})\right]    
  \end{split}
\end{equation}
where $\lambda$ is any lift of $\bar \lambda$ to $\Lambda$.  It is
easy to see that the quantity in brackets depends only on $\bar
\lambda$.  We extend the definition of $\sigma_\alpha$ to $\ffield$ by
additivity.  One can check that with this definition,
$\sigma^2_\alpha(f)=f$ for all $f\in \ffield$.  Furthermore it is
proven in \cite{cg-jams} (see also \cite{mcnamara2}) that this action
satisfies the defining relations of $W$: if $(m_{i,j})$ is the Coxeter
matrix for $\Phi$, then
\begin{equation}
  \label{eq:W_braid}
(\sigma_{i}\sigma_{j})^{m_{i,j}} (f) = f\quad \text{for all $i,j$ and $f\in \ffield$.}
\end{equation}
Therefore (\ref{def:action}) extends to an action of the full Weyl
group $W$ on $\ffield$, which we denote 
\begin{equation}\label{eq:metaWnotation}
(w,f) \longmapsto w (f).
\end{equation}
We remark that if $n=1$, the action \eqref{def:action} collapses to
the usual action \eqref{eq:W_on_K} of $W$ on $\ffield $.  That the
quantity in brackets in \eqref{def:action} depends only on $\bar
\lambda$ and not $\lambda$ translates to the following lemma:

\begin{lemma}
  \label{lemma:h_exchange}
  Let $f\in \ffield$ and $h\in \ffield_0$.  Then for any $w\in W$,
$$w(hf)=(w.h)\cdot w(f).$$
Here $w.h$ means the action of \eqref{eq:W_on_K}, whereas $\cdot$
denotes multiplication in $\ffield$.\hfill \qed
\end{lemma}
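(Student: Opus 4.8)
The plan is to reduce to the case where $w=\sigma_i$ is a simple reflection, and then to verify the identity directly from the defining formula \eqref{def:action}. Since both sides of $w(hf) = (w.h)\cdot w(f)$ are additive in $f$, and the $W$-action on $\ffield$ has already been shown to be well-defined via the braid relations \eqref{eq:W_braid}, it suffices to prove the statement for generators; the general case then follows by induction on the length of $w$, writing $w = \sigma_i w'$ and using that $w'.h \in \ffield_0$ (the action \eqref{eq:W_on_K} of $W$ preserves $\Lambda_0$, since $\Lambda_0$ is $W$-stable — this itself needs a one-line check: $B$ is $W$-invariant and $W$ permutes the simple roots up to sign, so the defining congruences in \eqref{eqn:defoflambda0} are preserved). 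Thus the inductive step is $\sigma_i(h \cdot w'(f)) = (\sigma_i.h)\cdot \sigma_i(w'(f)) = (\sigma_i.h)\cdot(\sigma_i w'.h)\cdot\dots$, which telescopes correctly once the base case is known.

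For the base case, I would take $f \in \ffield_{\bar\mu}$ homogeneous for the decomposition \eqref{eq:direct_sum} and $h \in \ffield_0$, so that $h \in \ffield_{\bar 0}$ and hence $hf \in \ffield_{\bar\mu}$ (the decomposition \eqref{eq:direct_sum} is a grading by $\Lambda/\Lambda_0$, since $\xi(x^\lambda x^\nu) = \xi(\overline{\lambda+\nu}) x^\lambda x^\nu$). The crucial point is that the bracketed expression in \eqref{def:action}, as well as the exponent $r_{m(\alpha_i)}(-B(\lambda,\alpha_i)/Q(\alpha_i))$, depends only on the class $\bar\lambda$, not on the lift. Since $hf$ and $f$ both lie in $\ffield_{\bar\mu}$, the same bracket and the same twisting exponent appear in the formulas for $\sigma_i(hf)$ and for $\sigma_i(f)$. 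Therefore
\[
\sigma_i(hf) = \frac{\sigma_i.(hf)}{1 - v x^{m(\alpha_i)\alpha_i}}\cdot[\text{bracket}_{\bar\mu}] = (\sigma_i.h)\cdot\frac{\sigma_i.f}{1-vx^{m(\alpha_i)\alpha_i}}\cdot[\text{bracket}_{\bar\mu}] = (\sigma_i.h)\cdot\sigma_i(f),
\]
using only that $\sigma_i.(hf) = (\sigma_i.h)(\sigma_i.f)$, which is immediate because \eqref{eq:W_on_K} is multiplicative.

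The main obstacle — really the only content beyond bookkeeping — is confirming that $h\in\ffield_0$ forces $hf$ to land in the \emph{same} graded piece $\ffield_{\bar\mu}$ as $f$, so that literally the same data (bracket and twist) is used on both sides; this is exactly where the hypothesis $h\in\ffield_0$ rather than $h\in\ffield$ is needed, and it is what makes the argument work. A secondary point to get right is the inductive reduction: one must check that $\ffield_0 = \ffield_{\bar 0}$ is stable under the dotted action (clear, since $w.x^\lambda = x^{w\lambda}$ and $\Lambda_0$ is $W$-stable) so that $w'.h$ remains a legitimate input to the lemma at the next stage. Everything else is a direct substitution into \eqref{def:action}.
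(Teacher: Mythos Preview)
Your proof is correct and follows exactly the approach the paper indicates: the paper does not give a formal proof but simply remarks, just before stating the lemma, that ``the quantity in brackets in \eqref{def:action} depends only on $\bar\lambda$ and not $\lambda$ translates to the following lemma,'' and then places a $\qed$. Your argument spells out precisely this translation---that $h\in\ffield_0$ keeps $hf$ in the same graded piece $\ffield_{\bar\mu}$ as $f$, so the same bracket is applied---together with the routine inductive reduction to simple reflections using the $W$-stability of $\Lambda_0$.
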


Lemma \ref{lemma:h_exchange} is used repeatedly in the proofs
below. It is important to note that the action of $W$ on $\ffield$
defined by \eqref{def:action} is $\C$-linear, but is {\em{not}} by
endomorphisms of that ring, i.e.~it is not in general multiplicative.
The point of Lemma \ref{lemma:h_exchange} is that if we have a product
of two terms $hf$, the first of which satisfies $h\in \ffield_{0}$, then
in \eqref{def:action} we can apply $w$ to the product $hf$ by
performing the usual permutation action on $h$ and then acting on $f$
by the twisted $W$-action.

Next we use this Weyl group action to define certain divided
difference operators.  For $1\leq i\leq r$ and $f\in \ffield$ define
the \emph{Demazure operators} by
\begin{equation}\label{def:dem}
  \Dem_i(f)=\Dem_{\sigma _i}(f)=
  \frac{f-x^{m(\alpha _i)\alpha _i}\cdot \sigma _i(f)}
  {1-x^{m(\alpha _i)\alpha _i}},
\end{equation}
and the \emph{Demazure-Lusztig operators} by
\begin{equation}\label{def:dl}
  \begin{split}
    \T _i(f)=\T _{\sigma _i}(f)&=
    \left(1-v\cdot x^{m(\alpha _i)\alpha _i}\right)\cdot \Dem _i(f) -f\\
    &=\left(1-v\cdot x^{m(\alpha _i)\alpha _i}\right)\cdot
    \frac{f-x^{m(\alpha _i)\alpha _i}\cdot \sigma _i(f)}
    {1-x^{m(\alpha _i)\alpha _i}}-f.
  \end{split}
\end{equation}
When there is no danger of confusion, we write more simply
\begin{equation*}
  \Dem_i=
  \frac{1-x^{m(\alpha _i)\alpha _i}\sigma _i}
  {1-x^{m(\alpha _i)\alpha _i}}\mbox{  and  }
  \T_i=\left(1-v\cdot x^{m(\alpha _i)\alpha _i}\right)\cdot \Dem_i - 1,
\end{equation*}
that is, a rational function $h$ in the above equations is interpreted
to mean the ``multiplication by $h$'' operator. The rational functions
here are in $\ffield _0.$ 

We prove in the following section that the operators $\Dem _i$ and
$\T_i$ satisfy the same braid relations as the $\sigma _i.$
Consequently, we can define $\Dem_w$ and $\T_w$ for any $w\in W$ as
follows. Let $w=\sigma_{i_1}\cdots \sigma _{i_l}$ be a reduced
expression for $w$ in terms of simple reflections. Then we define
$$\Dem _w=\Dem _{i_1}\cdots \Dem _{i_l}
\mbox{\ \ \ \ and\ \ \ \ } \T _w=\T _{i_1}\cdots \T _{i_l}.$$

In the first two theorems below, both sides of the equalities are to
be understood as identities of operators on $\ffield.$

\begin{theorem}\label{thm:long_word}
  For the long element $w_0$ of the Weyl group $W$ we have
$$\Dem _{w_0} =
\frac{1}{\Delta } \cdot \sum _{w\in W} \sgn (w)\cdot \prod _{\alpha
  \in \Phi(w^{-1})} x^{m(\alpha )\alpha }\cdot w .$$
\end{theorem}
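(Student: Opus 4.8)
The plan is to work in the ring $\mathcal{H}$ of $\C$-linear operators on $\ffield$ of the form $\sum_{w\in W} M_{h_w}\circ w$, where $w$ denotes the metaplectic action, $M_h$ is multiplication by $h$, and $h_w\in\ffield_0$. By Lemma \ref{lemma:h_exchange} one has $w\circ M_h=M_{w.h}\circ w$ and $w\circ w'=ww'$, so $\mathcal{H}$ is closed under composition; and since the $w$ are linearly independent over $\ffield_0$ (standard), every element of $\mathcal{H}$ has a unique such expression. Since $m(\alpha_i)\alpha_i\in\Lambda_0$ makes $x^{m(\alpha_i)\alpha_i}\in\ffield_0$, we have $\Dem_i=M_{(1-x^{m(\alpha_i)\alpha_i})^{-1}}-M_{x^{m(\alpha_i)\alpha_i}(1-x^{m(\alpha_i)\alpha_i})^{-1}}\circ\sigma_i\in\mathcal{H}$; and, $\Delta=\prod_{\alpha\in\Phi^+}(1-x^{m(\alpha)\alpha})$ being a unit of $\ffield_0$, also $R:=M_{\Delta^{-1}}\circ\sum_w\sgn(w)\,M_{P_w}\circ w\in\mathcal{H}$, where $P_w:=\prod_{\alpha\in\Phi(w^{-1})}x^{m(\alpha)\alpha}$. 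The goal is to characterize $\Dem_{w_0}$ inside $\mathcal{H}$ by a right-absorption property and to verify that $R$ shares it.

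The braid relations (next section) make $\Dem_w$ well defined, and an induction on $\ell(w)$ via the Bruhat lifting property gives $\Dem_w=\sum_{u\le w}c_{u,w}\,u$ with $c_{u,w}\in\ffield_0$; the standard identity $\Phi((w'\sigma_i)^{-1})=\Phi(w'^{-1})\sqcup\{w'\alpha_i\}$ for $\ell(w'\sigma_i)>\ell(w')$, together with $m(w'\alpha_i)=m(\alpha_i)$ ($W$-invariance of $Q$), yields $c_{w,w}=\prod_{\alpha\in\Phi(w^{-1})}\bigl(-x^{m(\alpha)\alpha}(1-x^{m(\alpha)\alpha})^{-1}\bigr)\neq 0$, so triangularity makes $\{\Dem_w\}_{w\in W}$ a second basis of $\mathcal{H}$. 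The quadratic relation $\Dem_i^2=\Dem_i$ (immediate) gives $\Dem_w\Dem_i=\Dem_{w\sigma_i}$ when $\ell(w\sigma_i)>\ell(w)$ and $\Dem_w\Dem_i=\Dem_w$ otherwise; expanding $T\in\mathcal{H}$ as $\sum_w a_w\Dem_w$ and matching coefficients in $T\Dem_i=T$ forces $a_w=0$ unless $w\sigma_i<w$ for every $i$, i.e. unless $w=w_0$. Hence $\{T\in\mathcal{H}:T\Dem_i=T\text{ for all }i\}=\ffield_0\cdot\Dem_{w_0}$, and it remains to prove (i) $R\Dem_i=R$ for all $i$, and (ii) that $R$ and $\Dem_{w_0}$ have the same coefficient of the basis vector $w_0$ (for $\Dem_{w_0}$ this coefficient is $c_{w_0,w_0}$, using $\Phi(w_0^{-1})=\Phi^+$); these together give $R=\Dem_{w_0}$.

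Part (i) is the main computation. From $(1-x^{m(\alpha_i)\alpha_i})\Dem_i=1-x^{m(\alpha_i)\alpha_i}\sigma_i$, pushing the $\ffield_0$-coefficient through $w$ by Lemma \ref{lemma:h_exchange} (with $w.x^{m(\alpha_i)\alpha_i}=x^{m(w\alpha_i)(w\alpha_i)}$ and $w\circ\sigma_i=w\sigma_i$ in $\mathcal{H}$) gives $w\circ\Dem_i=M_{(1-x^{m(\beta)\beta})^{-1}}\circ(w-x^{m(\beta)\beta}\,w\sigma_i)$ with $\beta:=w\alpha_i$. In $\Delta R\Dem_i=\sum_w\sgn(w)\,M_{P_w}\circ(w\circ\Dem_i)$ I would partition $W$ into pairs $\{w,w\sigma_i\}$, choosing the representative so that $\beta=w\alpha_i\in\Phi^+$, and combine the two terms of each pair using $\sgn(w\sigma_i)=-\sgn(w)$, $w\sigma_i\alpha_i=-\beta$, $m(-\beta)=m(\beta)$, $(1-x^{-m(\beta)\beta})^{-1}=-x^{m(\beta)\beta}(1-x^{m(\beta)\beta})^{-1}$, $w\sigma_i\sigma_i=w$, and $P_{w\sigma_i}=P_w\,x^{m(\beta)\beta}$ (from $\Phi((w\sigma_i)^{-1})=\Phi(w^{-1})\sqcup\{\beta\}$): the factor $1-x^{m(\beta)\beta}$ cancels against $P_w-P_{w\sigma_i}=P_w(1-x^{m(\beta)\beta})$, and the pair contributes exactly $\sgn(w)P_w\,w+\sgn(w\sigma_i)P_{w\sigma_i}\,w\sigma_i$, which is its contribution to $\Delta R$; summing over pairs gives $\Delta R\Dem_i=\Delta R$, hence $R\Dem_i=R$. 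For (ii), expanding $\Dem_{w_0}=\Dem_{i_1}\cdots\Dem_{i_N}$ along a reduced word, the only term producing the operator $w_0=\sigma_{i_1}\cdots\sigma_{i_N}$ is obtained by taking the $\sigma$-part of each factor; moving all reflections to the right it equals $\prod_{k=1}^{N}\bigl(-x^{m(\beta_k)\beta_k}(1-x^{m(\beta_k)\beta_k})^{-1}\bigr)$ with $\{\beta_1,\dots,\beta_N\}=\Phi(w_0^{-1})=\Phi^+$, that is $\sgn(w_0)\bigl(\prod_{\alpha\in\Phi^+}x^{m(\alpha)\alpha}\bigr)\Delta^{-1}$, precisely the coefficient of $w_0$ in $R$.

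The real obstacle should be the sign-and-denominator bookkeeping in (i): making sure the two separate occurrences of $1-x^{m(\beta)\beta}$ (one from $\Dem_i$, one from the coefficient attached to $w\sigma_i$ after inverting $1-x^{-m(\beta)\beta}$) cancel exactly against $P_w-P_{w\sigma_i}=P_w(1-x^{m(\beta)\beta})$, and that no intermediate step is divided by zero as an identity of rational functions. The other ingredients, namely the Bruhat lifting property, the formula $\Phi((w\sigma_i)^{-1})=\Phi(w^{-1})\sqcup\{w\alpha_i\}$, the relation $\Dem_i^2=\Dem_i$, and the linear independence of the $w$ over $\ffield_0$, are standard or immediate.
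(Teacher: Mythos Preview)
Your argument is correct and complete, but it takes a genuinely different route from the paper's proof. The paper uses \emph{left} absorption: from $\Dem_j\Dem_{w_0}=\Dem_{w_0}$ it deduces $\sigma_j\Dem_{w_0}=\Dem_{w_0}$, hence in the expansion $\Dem_{w_0}=\sum_w R_w\,w$ the coefficients satisfy the recursion $\sigma_j.R_w=R_{\sigma_j w}$; one then computes $R_{w_0}$ (exactly as you do in part (ii), via Corollary \ref{corollary:rearrangepolys}) and checks that the claimed formula $\sgn(w)\Delta^{-1}P_w$ is compatible with this recursion. You instead use \emph{right} absorption: you introduce the second basis $\{\Dem_w\}$ of $\mathcal{H}$ by triangularity, prove the clean characterization $\{T:T\Dem_i=T\ \forall i\}=\ffield_0\cdot\Dem_{w_0}$, and then verify directly that the right-hand side $R$ satisfies $R\Dem_i=R$ by the pairwise cancellation over $\{w,w\sigma_i\}$. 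What your approach buys is that once (i) and (ii) are checked there is nothing further to verify---no separate computation that the formula is consistent with a recursion---at the cost of setting up the $\Dem_w$-basis and its triangularity. What the paper's approach buys is that it never leaves the $\{w\}$-basis and the recursion $u.R_w=R_{uw}$ is an immediate one-line consequence of $\sigma_j\Dem_{w_0}=\Dem_{w_0}$. Both proofs share the same computation of the $w_0$-coefficient.

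One small comment on your write-up of (i): the phrase ``the factor $1-x^{m(\beta)\beta}$ cancels against $P_w-P_{w\sigma_i}$'' is slightly misleading as stated, since $P_w-P_{w\sigma_i}$ does not appear as such in the sum; what actually happens is that after substituting $(1-x^{-m(\beta)\beta})^{-1}=-x^{m(\beta)\beta}(1-x^{m(\beta)\beta})^{-1}$ the coefficients of $w$ and of $w\sigma_i$ in the pair each separately acquire a factor $1-x^{m(\beta)\beta}$ cancelling the common denominator. The bookkeeping comes out exactly as you claim.
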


\begin{theorem}\label{thm:T_sum}
  We have
$$\tDelta \cdot \Dem _{w_0} =\sum _{w\in W} \T _w.$$
\end{theorem}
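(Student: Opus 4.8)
The plan is to show that each side of the identity satisfies $\T_i X = vX$ for every $i$, to show that such an operator $X$ is unique up to a $\C$-scalar, and then to fix the scalar by comparing a single coefficient. I would first record the bookkeeping: since $B$ is $W$-invariant and the simple roots $\Z$-span $\Phi$, the lattice $\Lambda_0$ is $W$-stable, so together with $m(\alpha_i)\alpha_i\in\Lambda_0$ one gets $m(\alpha)\alpha\in\Lambda_0$ for every $\alpha\in\Phi$ (write $\alpha$ as a Weyl translate of a simple root and use that $Q$, hence $m$, is $W$-invariant); in particular $\Delta$, $\tDelta$, and all coefficient functions appearing in $\Dem_i$ and $\T_i$ lie in $\ffield_0$. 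By Lemma~\ref{lemma:h_exchange} every operator built from the $\Dem_i$ and $\T_i$ by composition and $\C$-linear combination, as well as $\tDelta\cdot\Dem_{w_0}$, can then be written as $\sum_{w\in W}f_w\cdot w$ with $f_w\in\ffield_0$ and $w$ the operator of \eqref{eq:metaWnotation}; such an operator is determined by the coefficient family $(f_w)$, and left multiplication by an $\ffield_0$-element scales every $f_w$. I will use the expansion $\T_i=b_i+c_i\,\sigma_i$, where $b_i=(1-v)x^{m(\alpha_i)\alpha_i}/(1-x^{m(\alpha_i)\alpha_i})$ and $c_i=-(1-vx^{m(\alpha_i)\alpha_i})x^{m(\alpha_i)\alpha_i}/(1-x^{m(\alpha_i)\alpha_i})$.

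The core step is to check that $\sum_{w\in W}\T_w$ and $\tDelta\Dem_{w_0}$ both satisfy $\T_iX=vX$ for every $i$. For $\sum_w\T_w$ this needs only the quadratic relation $\T_i^2=(v-1)\T_i+v$ and the braid relations: pair $\{w:\sigma_iw>w\}$ with $\{w:\sigma_iw<w\}$ via $w\mapsto\sigma_iw$, use $\T_i\T_w=\T_{\sigma_iw}$ if $\sigma_iw>w$ and $\T_i\T_w=(v-1)\T_w+v\T_{\sigma_iw}$ otherwise, and sum to obtain $\T_i\bigl(\sum_w\T_w\bigr)=v\sum_w\T_w$. For $\tDelta\Dem_{w_0}$ I would factor $\tDelta=(1-vx^{m(\alpha_i)\alpha_i})\cdot\tDelta'_i$ with $\tDelta'_i=\prod_{\alpha\in\Phi^+\setminus\{\alpha_i\}}(1-vx^{m(\alpha)\alpha})$; since $\sigma_i$ permutes $\Phi^+\setminus\{\alpha_i\}$ and preserves $Q$, the function $\tDelta'_i$ is $\sigma_i$-invariant, so multiplication by it commutes with $\Dem_i$. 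Writing $\T_i=(1-vx^{m(\alpha_i)\alpha_i})\Dem_i-1$ and using the twisted Leibniz rule
\[
\Dem_i\circ\bigl(1-vx^{m(\alpha_i)\alpha_i}\bigr)=\bigl(v+vx^{-m(\alpha_i)\alpha_i}\bigr)+\bigl(1-vx^{-m(\alpha_i)\alpha_i}\bigr)\circ\Dem_i
\]
together with the absorption $\Dem_i\Dem_{w_0}=\Dem_{w_0}$ (valid because $\ell(\sigma_iw_0)<\ell(w_0)$ and $\Dem_i^2=\Dem_i$), one finds that the two surviving multiplication operators sum to $(v+1)\tDelta$, so $\T_i(\tDelta\Dem_{w_0})=(v+1)\tDelta\Dem_{w_0}-\tDelta\Dem_{w_0}=v\,\tDelta\Dem_{w_0}$.

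Now suppose $X=\sum_wf_w\cdot w$ satisfies $\T_iX=vX$ for all $i$. Comparing $u$-coefficients with $\T_i=b_i+c_i\sigma_i$ yields $b_if_u+c_i\cdot\sigma_i.(f_{\sigma_i u})=vf_u$, hence the recursion
\[
f_{\sigma_i u}=\frac{1-vx^{m(\alpha_i)\alpha_i}}{1-vx^{-m(\alpha_i)\alpha_i}}\cdot\sigma_i.(f_u),
\]
which determines all $f_w$ from $f_{w_0}$ (any $w$ equals $\sigma_{i_1}\cdots\sigma_{i_l}w_0$). So the space of such $X$ is at most one-dimensional over $\C$, and therefore $\sum_{w\in W}\T_w=\kappa\cdot\tDelta\Dem_{w_0}$ for a constant $\kappa$. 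To get $\kappa=1$ I would compare $w_0$-coefficients: only $\T_{w_0}$ contributes to the $w_0$-coefficient of $\sum_w\T_w$ (expanding $\T_w$ for $w\ne w_0$ along a reduced word produces only products of fewer than $\ell(w_0)$ simple reflections), and expanding $\T_{w_0}=\prod_k(b_{i_k}+c_{i_k}\sigma_{i_k})$ along a reduced word $w_0=\sigma_{i_1}\cdots\sigma_{i_N}$ leaves exactly $\prod_k\sigma_{i_1}\cdots\sigma_{i_{k-1}}.(c_{i_k})$; since the roots $\sigma_{i_1}\cdots\sigma_{i_{k-1}}(\alpha_{i_k})$ enumerate $\Phi^+$ and $m$ is $W$-invariant, this equals $\sgn(w_0)(\tDelta/\Delta)\prod_{\alpha\in\Phi^+}x^{m(\alpha)\alpha}$. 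The same computation for $\Dem_{w_0}$, or Theorem~\ref{thm:long_word}, gives its $w_0$-coefficient as $\sgn(w_0)\Delta^{-1}\prod_{\alpha\in\Phi^+}x^{m(\alpha)\alpha}$, whence $\tDelta\Dem_{w_0}$ has the same $w_0$-coefficient and $\kappa=1$.

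The main obstacle is the eigenoperator property of $\tDelta\Dem_{w_0}$: one must pin down the twisted Leibniz rule displayed above and verify that, after applying $\Dem_i\Dem_{w_0}=\Dem_{w_0}$, the terms conspire to give the eigenvalue $v$ exactly — this is precisely where the shape $1-vx^{m(\alpha)\alpha}$ of the factors of $\tDelta$ matters. The remaining ingredients (the $\ffield_0$-bookkeeping, the uniqueness recursion, and the coefficient comparison) are then routine.
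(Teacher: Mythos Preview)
Your proof is correct and follows essentially the same strategy as the paper's: show that both $\tDelta\Dem_{w_0}$ and $\sum_{w}\T_w$ are eigenoperators for each $\T_i$ with eigenvalue $v$ (the paper's Lemmas~\ref{WW_eigenvalue} and~\ref{Ts_eigenvalue}), derive from the eigenvalue condition the recursion that determines all coefficients from the $w_0$-coefficient (the paper's Lemma~\ref{coeff_rel_for_eigenvectors} and Corollary~\ref{longword_eigenvect_determines}), and finish by computing both $w_0$-coefficients via Corollary~\ref{corollary:rearrangepolys} and Theorem~\ref{thm:long_word}. Your verification of $\T_i(\tDelta\Dem_{w_0})=v\,\tDelta\Dem_{w_0}$ via the Leibniz-type identity for $\Dem_i\circ(1-vx^{m(\alpha_i)\alpha_i})$ is equivalent to the paper's use of $\Dem_i x^{m(\alpha_i)\alpha_i}\Dem_i=-\Dem_i$ (Lemma~\ref{lemma:quadratic}); both unwind to the same calculation.

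One phrasing should be tightened: the space of operators $X=\sum_w f_w\,w$ with $f_w\in\ffield_0$ satisfying $\T_iX=vX$ for all $i$ is not ``at most one-dimensional over $\C$'' (the recursion leaves the choice of $f_{w_0}\in\ffield_0$ free), so you cannot a priori conclude $\sum_w\T_w=\kappa\cdot\tDelta\Dem_{w_0}$ with $\kappa\in\C$. What your recursion actually shows, and what you then use, is the correct statement: any such $X$ is determined by its $w_0$-coefficient, so two eigenoperators with equal $w_0$-coefficients are equal. Since you go on to verify that the two $w_0$-coefficients coincide exactly, the argument is sound; just drop the ``constant $\kappa$'' detour.
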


We prove Theorem 3 in Section \ref{sec:pf_long} and Theorem 4 in
Section \ref{sec:pf_T_sum}.

\begin{remark}
  In Section \ref{sec:whittaker} we use the work of McNamara
  \cite{mcnamara2} to express Whittaker functions over a local
  $p$-adic field in terms of the
  operators introduced above.  In this section the parameteters will
  be specialized:  $v$ will be set to equal $q^{-1}$ (for $q$ the
  cardinality of the residue field) and the $g_i$ will be Gauss sums.
  For now, we only need these parameters to satisfy the relations
  (\ref{eq:qt}). 
\end{remark}

\section{Basic properties of the operators}\label{sec:props}

In this section we prove the \emph{quadratic relations} (Proposition
\ref{prop:quadratic_properties}) and \emph{braid relations}
(Proposition \ref{prop:demazbraid}) satisfied by the Demazure and
Demazure-Lusztig operators.

\begin{prop}\label{prop:quadratic_properties}
  The operators $\Dem _i$ and $\T_i$ ($1\leq i\leq r$) satisfy the
  following quadratic relations:
  \begin{enumerate}[(i)]
  \item $\Dem _i ^2=\Dem _i;$
  \item $\T _i ^2=(v-1)\T _i +v.$
  \end{enumerate}
\end{prop}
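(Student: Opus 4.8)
The plan is to verify both relations directly from the definitions \eqref{def:dem} and \eqref{def:dl}, reducing everything to the single key input that $\sigma_i^2 = \mathrm{id}$ on $\ffield$ (stated in the text after \eqref{def:action}) together with Lemma \ref{lemma:h_exchange}, which lets us move the factor $x^{m(\alpha_i)\alpha_i} \in \ffield_0$ in and out of the twisted action. Since (ii) follows from (i) by the purely formal substitution $\T_i = (1 - v x^{m(\alpha_i)\alpha_i})\Dem_i - 1$ (expand $\T_i^2$, collect, and use $\Dem_i^2 = \Dem_i$ plus $g_ig_{n-i} = v^{-1}$-type bookkeeping only insofar as it is hidden in $\sigma_i^2=\mathrm{id}$), the real content is establishing $\Dem_i^2 = \Dem_i$.

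For (i), write $y = x^{m(\alpha_i)\alpha_i}$ for brevity, so $\Dem_i(f) = (f - y\,\sigma_i(f))/(1-y)$. Note $\sigma_i. y = x^{-m(\alpha_i)\alpha_i} = y^{-1}$, and $y \in \ffield_0$ by the Lemma preceding \eqref{eq:W_on_K}'s discussion (namely $m(\alpha)\alpha \in \Lambda_0$), so Lemma \ref{lemma:h_exchange} gives $\sigma_i(y\,g) = y^{-1}\,\sigma_i(g)$ for any $g$. Then I would compute $\Dem_i(\Dem_i f)$ by applying the formula twice: the inner $\Dem_i f$ is a sum of two terms, $f/(1-y)$ and $-y\,\sigma_i(f)/(1-y)$; since $1/(1-y) \notin \ffield_0$ one cannot naively pull it out, so instead I would first clear denominators and track $\sigma_i\!\left(\frac{f - y\sigma_i f}{1-y}\right)$ using $\sigma_i^2(f)=f$, $\sigma_i(y) = y^{-1}$ in the permutation sense inside the twisted action, and the identity $\sigma_i\!\bigl(\tfrac{1}{1-y}\bigr)$ computed from $\sigma_i.(1-y) = 1 - y^{-1} = -y^{-1}(1-y)$ via Lemma \ref{lemma:h_exchange} applied to $\tfrac{1}{1-y}\in\ffield_0$ multiplied against things. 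After substituting and simplifying, the cross terms should cancel and leave exactly $\Dem_i(f)$.

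Alternatively — and this is probably the cleaner route to write up — I would observe that $\Dem_i = \dfrac{1 - y\sigma_i}{1-y}$ can be rewritten, using that multiplication by the $\ffield_0$-element $1/(1-y)$ commutes past the twisted $\sigma_i$ in the controlled way Lemma \ref{lemma:h_exchange} prescribes, into a form where $\Dem_i - 1 = \dfrac{(1-y)(\sigma_i - \mathrm{id})\text{-type correction}}{1-y}$; then $\Dem_i^2 - \Dem_i = \Dem_i(\Dem_i - 1)$ and one checks the operator $\Dem_i - 1$ is annihilated on the left by $\Dem_i$. Concretely, since $\sigma_i^2 = \mathrm{id}$, the operator $\Dem_i$ fixes the subspace of $\sigma_i$-invariants and one shows its image lies in that subspace — i.e. $\sigma_i(\Dem_i f) = \Dem_i f$ — whence $\Dem_i^2 f = \Dem_i(\Dem_i f) = \Dem_i f$ because $\Dem_i$ acts as the identity on $\sigma_i$-fixed vectors (a one-line check: if $\sigma_i g = g$ then $\Dem_i g = (g - y g)/(1-y) = g$, valid when we again use that $y\in\ffield_0$ so the twisted action on $yg$ is $y^{-1}\cdot g$... wait, this needs $\sigma_i(yg)=y^{-1}\sigma_i(g)=y^{-1}g$, not $yg$, so the naive cancellation must be redone — the correct statement is $\Dem_i g = (g - y\cdot y^{-1}g)/(1-y) = 0$?).

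The hard part will be exactly this: the twisted action does not satisfy $\sigma_i(y f) = y\,\sigma_i(f)$ but rather $\sigma_i(yf) = (\sigma_i.y)\sigma_i(f) = y^{-1}\sigma_i(f)$, so the operator $\Dem_i$ is genuinely $\dfrac{\mathrm{id} - y\sigma_i}{1-y}$ with $y\sigma_i$ meaning ``multiply by $y$, then apply twisted $\sigma_i$,'' and one must be scrupulous about the order. The reliable path is therefore the brute-force one: expand $\Dem_i^2(f)$ over a common denominator $(1-y)^2$, use $\sigma_i^2(f)=f$ and Lemma \ref{lemma:h_exchange} to evaluate every $\sigma_i$ of a product, collect terms, and confirm the numerator factors as $(1-y)\bigl(f - y\sigma_i(f)\bigr)$. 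For (ii), substitute and expand: $\T_i^2 = \bigl((1-vy)\Dem_i\bigr)^2 - 2(1-vy)\Dem_i + 1$; since $(1-vy)\in\ffield_0$, Lemma \ref{lemma:h_exchange} lets $(1-vy)$ pass through $\Dem_i$ picking up $\sigma_i.(1-vy) = 1 - vy^{-1}$, and after using $\Dem_i^2 = \Dem_i$ one collects to get $(v-1)\T_i + v$, where the constant $v$ emerges from the product $(1-vy)(1-vy^{-1})$-type term combined with $g_ig_{n-i}=v^{-1}$ normalizations built into $\sigma_i^2=\mathrm{id}$. I expect the bookkeeping in (i) to be the only real obstacle; (ii) is formal once (i) is in hand.
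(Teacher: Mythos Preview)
Your brute-force plan for (i) is exactly what the paper does: write $\Dem_i=\dfrac{1}{1-y}\cdot 1-\dfrac{y}{1-y}\,\sigma_i$ with $y=x^{m(\alpha_i)\alpha_i}$, square it as an operator using Lemma~\ref{lemma:h_exchange} to move $\sigma_i$ past the $\ffield_0$-coefficients (sending $y\mapsto y^{-1}$), and finish with the identity $\dfrac{1}{1-y}+\dfrac{1}{1-y^{-1}}=1$. So the core of your proposal is correct and matches the paper.

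Two remarks on the detours. First, your ``alternative'' route actually works; you talked yourself out of it by misreading the formula. In $\Dem_i(g)=(g-y\,\sigma_i(g))/(1-y)$ the second term is $y\cdot\sigma_i(g)$, not $\sigma_i(yg)$; so if $\sigma_i(g)=g$ then $\Dem_i(g)=(g-yg)/(1-y)=g$ on the nose. The other half, $\sigma_i(\Dem_i f)=\Dem_i f$, is a two-line check from Lemma~\ref{lemma:h_exchange} and $\sigma_i^2=\mathrm{id}$. That gives (i) with no denominators to clear.

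Second, your sketch of (ii) is slightly underspecified. The factor $(1-vy)$ does not simply ``pass through $\Dem_i$,'' since $\Dem_i$ mixes $1$ and $\sigma_i$; and $\Dem_i^2=\Dem_i$ alone does not collapse $((1-vy)\Dem_i)^2$. What you actually need in addition is $\sigma_i\Dem_i=\Dem_i$ (equivalently $\Dem_i\,y\,\Dem_i=-\Dem_i$, which the paper records separately as Lemma~\ref{lemma:quadratic}); this yields $\Dem_i(1-vy)\Dem_i=(1+v)\Dem_i$ and hence $(\T_i+1)(\T_i-v)=0$. The relation $g_ig_{n-i}=v^{-1}$ plays no separate role here---it is already absorbed into $\sigma_i^2=\mathrm{id}$ and should not reappear in the bookkeeping for (ii).
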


\begin{proof}
We prove (i) in detail and leave (ii) to the reader.  To simplify the
notation, we drop the subscripts and write $\Dem , \alpha,$ and
$\sigma$, and abbreviate $m (\alpha)$ to $m$.  Using the
definition of $\Dem$ and Lemma \ref{lemma:h_exchange}, we
have \begin{align*} \Dem^2&= \Bigl(\frac{1-x^{m \alpha}\sigma}{1-x^{m\alpha
}}\Bigr)^2\\
    &= \Bigl(\frac{1}{(1-x^{m\alpha})^2}+
      \frac{x^{m\alpha}}{1-x^{m\alpha}}
      \cdot \frac{x^{-m\alpha}}
      {1-x^{-m\alpha}}\Bigr) \cdot 1 \\
    &\qquad\qquad+\Bigl(\frac{-x^{m\alpha}}
      {(1-x^{m\alpha})^2}+ \frac{-x^{m\alpha}}{1-x^{m\alpha}}
      \cdot \frac{1}{1-x^{-m\alpha}}\Bigr)\cdot \sigma \\
    &=\frac{1}{1-x^{m\alpha}}\cdot
    \Bigl(\frac{1}{1-x^{m\alpha}}+
      \frac{1}{1-x^{-m\alpha}}\Bigr)\cdot 1\\
    &\qquad \qquad+\frac{-x^{m\alpha}}{1-x^{m\alpha}} \cdot \Bigl(\frac{1}{1-x^{m\alpha}}+ \frac{1}{1-x^{-m\alpha}}\Bigr)\cdot
    \sigma.
  \end{align*}
Since
$$\frac{1}{1-x^{m\alpha}}+
\frac{1}{1-x^{-m\alpha}} =1$$ we obtain $\Dem
^2=\Dem$.
\end{proof}

We pause to point out the key role played by Lemma
\ref{lemma:h_exchange} in the proof of Proposition
\ref{prop:quadratic_properties}: the action of $\sigma_{i}$ on an
arbitrary rational function is given by the complicated formula
\eqref{def:action}, but thanks to Lemma \ref{lemma:h_exchange} we can
pass the operator $\sigma$ past the monomial $x^{m (\alpha)\alpha}$,
after acting on this monomial by the usual permutation action.  This
fact will be used repeatedly throughout the paper.

\begin{lemma} \label{lemma:quadratic} We have $\Dem _i x^{m(\alpha
    _i)\alpha _i}\Dem _i =-\Dem _i.$
\end{lemma}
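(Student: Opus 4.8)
The plan is to reduce everything to a direct computation using the definition \eqref{def:dem} of $\Dem_i$ together with Lemma \ref{lemma:h_exchange}, exactly as in the proof of Proposition \ref{prop:quadratic_properties}(i). As before I drop subscripts and write $\Dem$, $\sigma$, $m$ for $m(\alpha)$, and set $u = x^{m\alpha}$ so that $\Dem = (1-u\sigma)/(1-u)$ and $\sigma.u = u^{-1}$. I first need to understand the middle operator $\Dem \circ (\text{mult.\ by }u) \circ \Dem$, so the key preliminary step is to compute $\Dem(u)$ and more generally $\Dem(u \cdot f)$ for $f \in \ffield_0$. Since $u \in \ffield_0$ (the rational functions appearing in these operator formulas are all in $\ffield_0$), Lemma \ref{lemma:h_exchange} lets me pass $\sigma$ past $u$: we get $\Dem(uf) = \frac{uf - u\cdot(\sigma.u)\cdot \sigma(f)}{1-u} = \frac{uf - \sigma(f)}{1-u}$, using $u\cdot(\sigma.u) = u\cdot u^{-1} = 1$.

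Next I would expand $\Dem \, u \, \Dem$ as an operator. Applying it to $f$, the inner $\Dem(f) = \frac{f - u\,\sigma(f)}{1-u}$, then multiply by $u$, then apply $\Dem$ again. Writing the result of the first $\Dem$ as a $\C$-linear combination of $1\cdot(\text{something})$ and $\sigma\cdot(\text{something})$ with coefficients in $\ffield_0$, I then apply $\Dem \, u$ to each piece using the formula from the previous paragraph (and the analogue with $\sigma$ in place of $1$, obtained by first commuting $\sigma$ all the way left via Lemma \ref{lemma:h_exchange} and $\sigma^2 = \mathrm{id}$). This produces, after collecting terms, an expression of the form $(\text{coeff}_1)\cdot 1 + (\text{coeff}_\sigma)\cdot \sigma$ with both coefficients rational functions in $u$. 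The identity $\frac{1}{1-u} + \frac{1}{1-u^{-1}} = 1$ — the same identity that made Proposition \ref{prop:quadratic_properties}(i) work — should collapse $\text{coeff}_1$ to $\frac{-1}{1-u}$ and $\text{coeff}_\sigma$ to $\frac{u}{1-u}$, which is exactly $-\Dem = -\frac{1-u\sigma}{1-u}$.

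The main obstacle is purely bookkeeping: keeping track of the four cross-terms (two from each application of $\Dem$) and making sure the $\sigma$'s are commuted past the monomials $u^{\pm 1}$ correctly via Lemma \ref{lemma:h_exchange} before the partial-fraction collapse. There is no conceptual difficulty — $\T_i$ never enters, and the quadratic relation $\Dem^2 = \Dem$ could even be invoked as a shortcut: since $\Dem\, u\, \Dem$ and $-\Dem$ are both operators, one could alternatively verify the identity by checking it on the two-dimensional space of operators $\{1,\sigma\}$-combinations, i.e.\ compare the action on a single generic $f$. I would present the direct computation, flagging the use of Lemma \ref{lemma:h_exchange} and the identity $\frac{1}{1-u}+\frac{1}{1-u^{-1}}=1$ as the two load-bearing facts.
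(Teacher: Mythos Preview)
Your proposal is correct and follows essentially the same route as the paper: a direct expansion of $\Dem\,x^{m\alpha}\,\Dem$ using Lemma \ref{lemma:h_exchange} to commute $\sigma$ past the monomial $u=x^{m\alpha}$, followed by elementary simplification. The paper's version is slightly more streamlined---it computes $u\Dem=\dfrac{u-u^{2}\sigma}{1-u}$ first and then applies $\Dem$ on the left in one pass, arriving at $-\Dem$ after combining the two resulting fractions, without needing to separately invoke the identity $\frac{1}{1-u}+\frac{1}{1-u^{-1}}=1$---but the substance is the same.
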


\begin{proof}
We use the same notation as in the proof of Proposition
\ref{prop:quadratic_properties} and compute directly:
\begin{align*}
\Dem x^{m\alpha}\Dem &= \Dem \left( \frac{x^{m\alpha }-x^{2m\alpha}\sigma }{1-x^{m\alpha }}\right)\\
     		     &= \frac{x^{m\alpha }-x^{2m\alpha}\sigma }{(1-x^{m\alpha })^{2}}-\frac{\sigma -x^{-m\alpha}}{(1-x^{m\alpha }) (1-x^{-m\alpha })}\\
		     &= \frac{x^{m\alpha }\sigma - 1}{1-x^{m\alpha }}\\
		     &=-\Dem .
\end{align*}
\end{proof}

\begin{prop}\label{prop:demazbraid}
Suppose $(\sigma_{i}\sigma_{j})^{m_{i,j}}=1$ is a defining relation
for $W$.  Then 
\begin{align}
\label{eq:db1}\Dem_{i}\Dem_{j}\Dem_{i}\dotsb &= \Dem_{j}\Dem_{i}\Dem_{j}\dotsb, \\
\label{eq:db2}\T_{i}\T_{j}\T_{i}\dotsb &= \T_{j}\T_{i}\T_{j}\dotsb, 
\end{align}
where there are $m_{i,j}$ factors on both sides of \eqref{eq:db1}--\eqref{eq:db2}.
\end{prop}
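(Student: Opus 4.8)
The plan is to deduce the braid relations for the Demazure and Demazure--Lusztig operators from the braid relations for the metaplectic Weyl group action, which we are allowed to assume (they are \eqref{eq:W_braid}). The key observation is that both $\Dem_i$ and $\T_i$ generate the same algebra as the operators $\sigma_i$, up to the invertible rational functions $1-x^{m(\alpha_i)\alpha_i}$ and $1-v\cdot x^{m(\alpha_i)\alpha_i}$, all of which lie in $\ffield_0$; by Lemma \ref{lemma:h_exchange} these can be slid past the twisted action at the cost of a permutation-action twist. Concretely, I would first rewrite $\sigma_i = h_i^{-1}(h_i - (1-x^{m(\alpha_i)\alpha_i})\Dem_i)$ with $h_i = x^{m(\alpha_i)\alpha_i}$, so that both $\Dem_i$ and $\sigma_i$ are expressed in terms of each other with coefficients in $\ffield_0$, and similarly express $\T_i = (1-v\cdot x^{m(\alpha_i)\alpha_i})\Dem_i - 1$ and invert this to recover $\Dem_i$ from $\T_i$. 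Thus it suffices to prove the braid relation for either family.

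For the Demazure case, the cleanest route is the standard one used in the nonmetaplectic theory: introduce the "$W$-module" structure and check a rank-two identity. More precisely, for a rank-two parabolic generated by $\sigma_i,\sigma_j$ with $m_{i,j}$ finite, I would compare the operator $\Dem_i\Dem_j\Dem_i\cdots$ (with $m_{i,j}$ factors) against $\Dem_j\Dem_i\Dem_j\cdots$ directly. Each is an $\ffield_0$-linear combination $\sum_{w}c_w\cdot w$ of the metaplectic Weyl-group operators $w$, ranging over $w$ in the rank-two subgroup, since each $\Dem_i$ is of this form and Lemma \ref{lemma:h_exchange} guarantees the product of such expressions is again of this form (the coefficients multiply after a permutation twist, staying in $\ffield_0$ because $\Lambda_0$ is $W$-stable). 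Because the operators $\{w\}$ are linearly independent over $\ffield$ — this follows from \eqref{eq:W_braid} giving a genuine $W$-action together with the fact that $w.x^\lambda = x^{w\lambda}$ distinguishes distinct $w$ — it is enough to show the two products have the same coefficient of the top term $w_0^{(i,j)}$ (the long element of the rank-two group) and that both collapse correctly; in fact the standard bookkeeping shows the coefficient of each $w$ in $\Dem_i\Dem_j\cdots$ depends only on $w$ and not on the order $i,j$, using repeatedly the single-variable identity $\frac{1}{1-t}+\frac{1}{1-t^{-1}}=1$ that already appeared in the proof of Proposition \ref{prop:quadratic_properties} and the companion identity $\Dem_i x^{m(\alpha_i)\alpha_i}\Dem_i = -\Dem_i$ of Lemma \ref{lemma:quadratic}.

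The main obstacle is establishing that the linear-independence and coefficient-comparison argument really goes through with the twisted action rather than the naive one: one must check that when multiplying two expressions $(\sum_w a_w w)(\sum_{w'}b_{w'}w')$, the resulting coefficient of a given group element is computed by $w.b_{w'}$ composed with the twisted action in the right way, and that the monomial prefactors $x^{m(\alpha)\alpha}$ occurring really do land in $\ffield_0$ so that Lemma \ref{lemma:h_exchange} applies at every step — this is where the hypothesis $m(\alpha)\alpha\in\Lambda_0$ (the first unnumbered Lemma) is essential. Once that is in place, the rank-two verification is a finite check: for $m_{i,j}\in\{2,3,4,6\}$ one expands both sides, uses the quadratic relation $\Dem_i^2=\Dem_i$ and Lemma \ref{lemma:quadratic} to reduce, and matches coefficients. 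Finally, \eqref{eq:db2} follows formally from \eqref{eq:db1}: since $\T_i = c_i\Dem_i - 1$ with $c_i = 1-v\cdot x^{m(\alpha_i)\alpha_i}\in\ffield_0$, and since $c_i$ commutes appropriately past the operators via Lemma \ref{lemma:h_exchange}, any word in the $\T_i$'s expands as an $\ffield_0$-combination of words in the $\Dem_i$'s of length $\le$ the original; the braid relation for the $\Dem$'s then forces the braid relation for the $\T$'s, exactly as in Iwahori--Hecke theory. Alternatively — and this may be cleaner to write — one observes $\T_i$ and $\T_i + 1$ are, up to units in $\ffield_0$, conjugate in spirit to the reflections, and invokes the general principle that operators satisfying the quadratic relation of Proposition \ref{prop:quadratic_properties}(ii) together with the appropriate "$R$-matrix" commutation satisfy braid relations iff the associated $\Dem$'s do; I would fall back on the direct rank-two computation if the formal argument proves delicate.
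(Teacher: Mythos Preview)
Your overall strategy---reduce to a rank-two computation, expand each braid word as an $\ffield_0$-linear combination $\sum_w c_w\cdot w$ using Lemma~\ref{lemma:h_exchange}, and compare coefficients via the linear independence of the $w$'s---is exactly the paper's approach. The paper simply carries out the brute-force expansion (illustrated for $A_2$): one computes $\Dem_i\Dem_j\Dem_i\cdots$ directly and finds a closed expression that visibly depends only on the longest element of the rank-two subgroup (in fact it is the rank-two instance of Theorem~\ref{thm:long_word}), hence is symmetric in $i,j$. The same direct expansion handles \eqref{eq:db2}.

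Two of your proposed shortcuts, however, do not work as stated. First, the quadratic relation $\Dem_i^2=\Dem_i$ and Lemma~\ref{lemma:quadratic} play no role in the braid verification: in an alternating word $\Dem_i\Dem_j\Dem_i\cdots$ with $i\neq j$ no two $\Dem_i$'s are adjacent, so there is nothing to contract; the check is pure expansion and simplification of the resulting rational-function coefficients. Second, your formal deduction of \eqref{eq:db2} from \eqref{eq:db1} has a real gap. Writing $\T_i=c_i\Dem_i-1$ and expanding a $\T$-braid word does produce products of $\Dem$'s, but interleaved with the coefficients $c_j$, and $\Dem_i$ does \emph{not} commute with $c_j$ (since $\sigma_i.c_j\neq c_j$ whenever $\langle\alpha_i,\alpha_j\rangle\neq 0$). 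So the $\Dem$-braid relation alone does not let you rearrange the two expansions into the same form; one would need an additional identity governing how the twisted $c_j$'s recombine, which is essentially as hard as the original statement. The paper sidesteps this by doing the same direct rank-two expansion for the $\T_i$ (the rank-two instance of Theorem~\ref{thm:T_sum}); your stated fallback to ``direct rank-two computation'' is the right move and is all that is needed.
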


\begin{proof}
Both statements boil down to explicit computations with rank $2$ root
systems, and in fact are special cases of Theorems \ref{thm:long_word}
and \ref{thm:T_sum}.  We explain what happens in detail with \eqref{eq:db1} in
$A_{2}$, which is typical of all the computations.  Since all roots have the
same length, we lighten notation by putting $m=m (\alpha)$.

By definition,
\[
\Dem_{1} = \frac{1-x^{m\alpha_{1}}\sigma_{1}}{1-x^{m\alpha_{1}}}.
\]
Next we apply $\Dem_{2}$ and use $\sigma_{2} (\alpha_{1})=\alpha_{1}+\alpha_{2}$:
\[
\Dem_{2}\Dem_{1} = \frac{1-x^{m\alpha_{1}}\sigma_{1}}{(1-x^{m\alpha_{2}}) (1-x^{m\alpha_{1}})}-\frac{x^{m\alpha_{2}}\sigma_{2}-x^{m (\alpha_{1}+2\alpha_{2})}\sigma_{2}\sigma_{1}}{(1-x^{m\alpha_{2}}) (1-x^{m(\alpha_{1}+\alpha_{2})})}
\]
Finally we apply $\Dem_{1}$ to obtain
\begin{multline*}
\Dem_{1}\Dem_{2}\Dem_{1} =
\frac{1-x^{m\alpha_{1}}\sigma_{1}}{(1-x^{m\alpha_{2}})
(1-x^{m\alpha_{1}})^{2}}-\frac{x^{m\alpha_{2}}\sigma_{2}-x^{m
(\alpha_{1}+2\alpha_{2})}\sigma_{2}\sigma_{1}}{(1-x^{m\alpha_{1}}) (1-x^{m\alpha_{2}})
(1-x^{m(\alpha_{1}+\alpha_{2})})} \\
-\biggl(\frac{x^{m\alpha_{1}}\sigma_{1}-1}{(1-x^{m\alpha_{1}}) (1-x^{m(\alpha_{1}+\alpha_{2})}) (1-x^{-m\alpha_{1}})}-\frac{x^{m(2\alpha_{1}+\alpha_{2})}\sigma_{1}\sigma_{2}-x^{m (2\alpha_{1}+2\alpha_{2})}\sigma_{1}\sigma_{2}\sigma_{1}}{(1-x^{m\alpha_{1}}) (1-x^{m\alpha_{2}}) (1-x^{m(\alpha_{1}+\alpha_{2})})} \biggr),
\end{multline*}
which simplifies to 
\begin{multline}\label{eq:final}
\Dem_{1}\Dem_{2}\Dem_{1} \\
=\frac{1-x^{m\alpha_{1}}\sigma_{1}-x^{m\alpha_{2}}\sigma_{2} + x^{m(2\alpha_{1}+\alpha_{2})}\sigma_{1}\sigma_{2}+x^{m (\alpha_{1}+2\alpha_{2})}\sigma_{2}\sigma_{1}-x^{m (2\alpha_{1}+2\alpha_{2})}\sigma_{1}\sigma_{2}\sigma_{1}}{\Delta },
\end{multline}
where $\Delta = (1-x^{m\alpha_{1}}) (1-x^{m\alpha_{2}}) (1-x^{m
(\alpha_{1}+\alpha_{2})})$.  The final formula \eqref{eq:final} clearly
depends only on the longest word in the Weyl group for $A_{2}$ and not
on the reduced expression used to define it, which proves
\eqref{eq:db1}. (Note that this computation also checks Theorem
\ref{thm:long_word} for $\Phi = A_{2}$.)
\end{proof}

\section{Proof of Theorem \ref{thm:long_word}}\label{sec:pf_long}

We now turn to the proof of Theorem \ref{thm:long_word}.  Before we
can begin, we require more notation.  The following is
\cite[Proposition 21.10]{Bump_LieGroups}, applied to $\Phi (w^{-1})$
instead of $\Phi (w)$:

\begin{prop}\label{prop:phi(w)description}
  Let $w=\sigma _{i_1}\sigma _{i_2}\cdots \sigma_{i_N}$ be a reduced
  expression for $w\in W$.  Then the set
$$\Phi(w^{-1})=\{\alpha \in \Phi^+: w^{-1}(\alpha )\in \Phi ^-\}$$ 
consists of the elements
$$\alpha_{i_{1}},\ \sigma_{i_1}(\alpha_{i_{2}}),\
\sigma_{i_1}\sigma_{i_2}(\alpha_{i_{3}}) ,\ldots ,\ \sigma
_{i_1}\cdots \sigma _{i_{N-1}} (\alpha _{i_{N}}),$$ where the
$\alpha_{i}$ are the simple roots.
\end{prop}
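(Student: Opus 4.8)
The plan is to prove the statement by induction on $N=\ell(w)$, peeling simple reflections off the \emph{front} of the reduced word. The base cases are trivial: for $N=0$ both sides are empty, and for $N=1$ the reflection $\sigma_{i_1}$ sends $\alpha_{i_1}$ to $-\alpha_{i_1}$ and permutes $\Phi^+\setminus\{\alpha_{i_1}\}$, so $\Phi(\sigma_{i_1}^{-1})=\{\alpha_{i_1}\}$.

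For the inductive step, the key fact I would isolate is the following recursion: if $w'\in W$ and $\sigma_i$ is a simple reflection with $\ell(\sigma_i w')=\ell(w')+1$, then
$$\Phi\bigl((\sigma_i w')^{-1}\bigr)=\{\alpha_i\}\ \sqcup\ \sigma_i\bigl(\Phi(w'^{-1})\bigr),$$
a disjoint union. To establish this I would use the two standard facts: (a) $\sigma_i$ permutes $\Phi^+\setminus\{\alpha_i\}$ while $\sigma_i(\alpha_i)=-\alpha_i$; and (b) $\ell(\sigma_i w')>\ell(w')$ if and only if $w'^{-1}(\alpha_i)\in\Phi^+$ (see \cite{bourbaki}). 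Then, for $\beta\in\Phi^+$, membership of $\beta$ in $\Phi((\sigma_iw')^{-1})$ is decided by the sign of $w'^{-1}(\sigma_i\beta)$: the case $\beta=\alpha_i$ follows directly from (b), and for $\beta\neq\alpha_i$ one sets $\gamma=\sigma_i\beta\in\Phi^+\setminus\{\alpha_i\}$ and notes that $\beta\in\Phi((\sigma_iw')^{-1})\iff\gamma\in\Phi(w'^{-1})$, with disjointness coming from $\alpha_i\notin\Phi(w'^{-1})$ (again by (b)). Applying this recursion repeatedly to the reduced expression $w=\sigma_{i_1}(\sigma_{i_2}\cdots\sigma_{i_N})$, noting that each tail $\sigma_{i_k}\cdots\sigma_{i_N}$ is again reduced, yields
$$\Phi(w^{-1})=\bigl\{\,\alpha_{i_1},\ \sigma_{i_1}(\alpha_{i_2}),\ \ldots,\ \sigma_{i_1}\cdots\sigma_{i_{N-1}}(\alpha_{i_N})\,\bigr\},$$
and the iterated union being disjoint shows these roots are pairwise distinct.

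I expect the only genuine content — hence the main obstacle — to be fact (b) and the bookkeeping in the recursion; everything else is formal. An essentially equivalent route is to first prove the more familiar right-to-left description of $\Phi(w)$ itself by the analogous induction (peeling the last reflection, using $\Phi(u\sigma_i)=\sigma_i(\Phi(u))\sqcup\{\alpha_i\}$ when $\ell(u\sigma_i)>\ell(u)$), and then apply it to the reduced word $\sigma_{i_N}\cdots\sigma_{i_1}$ for $w^{-1}$, reindexing the resulting list; the ``peel from the front'' version above has the advantage of producing the stated left-to-right list with no reindexing. In any case this is the classical result \cite[Proposition 21.10]{Bump_LieGroups}, so quoting it suffices in the paper.
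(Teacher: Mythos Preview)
Your argument is correct and is the standard induction proof of this classical fact. The paper itself does not give a proof at all: the proposition is stated as a direct citation of \cite[Proposition~21.10]{Bump_LieGroups}, merely noting that it is applied to $\Phi(w^{-1})$ rather than $\Phi(w)$. So your proposal does strictly more than the paper, and your closing remark that ``quoting it suffices in the paper'' is exactly what the authors did. One tiny quibble: the disjointness of $\{\alpha_i\}$ from $\sigma_i(\Phi(w'^{-1}))$ is most directly seen because $\sigma_i(\alpha_i)=-\alpha_i\notin\Phi^+\supset\Phi(w'^{-1})$; your phrasing via $\alpha_i\notin\Phi(w'^{-1})$ also works once you observe that $\sigma_i$ then carries $\Phi(w'^{-1})\subset\Phi^+\setminus\{\alpha_i\}$ into $\Phi^+\setminus\{\alpha_i\}$, but you might make that step explicit.
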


Let $p\colon \Phi \ra \ffield_0$ be a map.  We say $p$ is
\emph{$W$-intertwining} if for any $\beta \in \Phi$ and $w\in W$, we have 
$$p(w\beta )=w.p(\beta ).$$ Proposition
\ref{prop:phi(w)description} has the following corollary, useful for
the proof of Theorems \ref{thm:long_word} and \ref{thm:T_sum}:

\begin{corollary}\label{corollary:rearrangepolys}
  Assume $p\colon \Phi \ra \ffield_0$ is $W$-intertwining, and suppose
$w\in W$ has a reduced expression $w=\sigma _{i_1}\sigma _{i_2}\cdots
\sigma _{i_N}$. Then we have the following equality of operators on
$\ffield$:
\begin{equation}\label{eq:lhs}
p(\alpha _{i_1})\sigma_{i_1}\cdot p(\alpha _{i_2})\sigma _{i_2}
\cdots p(\alpha _{i_N})\sigma _{i_N}
=\biggl(\prod _{\alpha \in \Phi(w^{-1})} p(\alpha )\biggr) \cdot w.
\end{equation}
\end{corollary}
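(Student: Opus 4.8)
The plan is to prove \eqref{eq:lhs} by induction on the length $N$ of the reduced expression, pushing the rational functions $p(\alpha_{i_j})$ to the left past the Weyl group elements accumulated so far. The base case $N=0$ is trivial (both sides equal the identity operator), and $N=1$ is just the statement that $\Phi(\sigma_{i_1}^{-1})=\{\alpha_{i_1}\}$. For the inductive step, write $w'=\sigma_{i_1}\cdots\sigma_{i_{N-1}}$, which is a reduced expression of length $N-1$, so by the inductive hypothesis the product of the first $N-1$ factors equals $\bigl(\prod_{\alpha\in\Phi(w'^{-1})}p(\alpha)\bigr)\cdot w'$.

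The key step is then to move $w'$ past the last factor $p(\alpha_{i_N})\sigma_{i_N}$. Here I would invoke Lemma \ref{lemma:h_exchange}: since $p$ takes values in $\ffield_0$, we have $w'\bigl(p(\alpha_{i_N})f\bigr)=(w'.p(\alpha_{i_N}))\cdot w'(f)$ for all $f\in\ffield$, which as an identity of operators reads $w'\cdot p(\alpha_{i_N})=(w'.p(\alpha_{i_N}))\cdot w'$. Combining this with the $W$-intertwining property $w'.p(\alpha_{i_N})=p(w'(\alpha_{i_N}))=p(\sigma_{i_1}\cdots\sigma_{i_{N-1}}(\alpha_{i_N}))$, the whole product becomes
\[
\biggl(\prod_{\alpha\in\Phi(w'^{-1})}p(\alpha)\biggr)\cdot p\bigl(\sigma_{i_1}\cdots\sigma_{i_{N-1}}(\alpha_{i_N})\bigr)\cdot w'\sigma_{i_N}.
\]
Finally, by Proposition \ref{prop:phi(w)description} applied to $w=w'\sigma_{i_N}$ (whose given reduced expression is $\sigma_{i_1}\cdots\sigma_{i_N}$), the set $\Phi(w^{-1})$ is exactly $\Phi(w'^{-1})$ together with the single new element $\sigma_{i_1}\cdots\sigma_{i_{N-1}}(\alpha_{i_N})$ — indeed the first $N-1$ elements in the list of Proposition \ref{prop:phi(w)description} for $w$ are precisely the list for $w'$. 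Since all the $p(\alpha)$ lie in $\ffield_0$, which is a commutative ring, the order of the product does not matter, and we conclude that the product over the first $N-1$ factors times the new factor equals $\prod_{\alpha\in\Phi(w^{-1})}p(\alpha)$, giving \eqref{eq:lhs}.

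The main obstacle — really the only subtle point — is making sure that $w'.p(\alpha_{i_N})$ can legitimately be pulled out as a scalar (multiplication operator) in front of $w'$; this is exactly what Lemma \ref{lemma:h_exchange} provides, and it is the reason the hypothesis $p\colon\Phi\to\ffield_0$ (rather than into all of $\ffield$) is essential. One should also note that the product $\prod_{\alpha\in\Phi(w^{-1})}p(\alpha)$ is unambiguous only because $\ffield_0$ is commutative, so no ordering convention is needed; I would remark on this explicitly. Everything else is bookkeeping with Proposition \ref{prop:phi(w)description}.
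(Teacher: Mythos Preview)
Your proof is correct and follows essentially the same approach as the paper's own argument. The paper carries out the rearrangement all at once---passing every $\sigma_{i_j}$ to the right in a single step via repeated use of Lemma~\ref{lemma:h_exchange}, then invoking Proposition~\ref{prop:phi(w)description} and the $W$-intertwining property to identify the resulting factors---whereas you organize the same mechanism as an induction on $N$; the ingredients and the logic are identical.
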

\begin{proof}
  Making repeated use of Lemma \ref{lemma:h_exchange}, we can re-order
the operators on the left of \eqref{eq:lhs} by passing all the
$\sigma_{i_j}$s to the right and all elements of $\ffield_0$ to the
left.  After this, the left of \eqref{eq:lhs} becomes
$$
p(\alpha _{i_1})\cdot \left(\sigma _{i_1}.p(\alpha _{i_2})\right)
\cdot \left(\sigma _{i_1}\sigma _{i_2}.p(\alpha _{i_3})\right)
\cdots \left(\sigma_{i_1}\cdots \sigma_{i_{n-1}}.p(\alpha_{i_N})\right)
\cdot \sigma_{i_1}\sigma_{i_2}\cdots \sigma_{i_N}.
$$
Here $\sigma _{i_1}\dotsb \sigma _{i_N}$ is a reduced
expression for $w$. Moreover, by
Proposition \ref{prop:phi(w)description},
$$\alpha _{i_1},\ \sigma _{i_1}(\alpha _{i_2}),\ 
\sigma _{i_1}\sigma _{i_2}(\alpha _{i_3}),\ldots ,\ 
\sigma _{i_1}\cdots \sigma _{i_{N-1}}(\alpha _{i_N})$$
enumerates $\Phi(w^{-1}).$ As a consequence the corresponding
elements of $\ffield_0$, namely
$$p(\alpha _{i_1}),\ p(\sigma _{i_1}(\alpha _{i_2})),\ 
p(\sigma _{i_1}\sigma _{i_2}(\alpha _{i_3})),\ldots ,\ 
p(\sigma _{i_1}\cdots \sigma _{i_{N-1}}(\alpha _{i_N})),$$
have product
$\displaystyle \prod _{\alpha \in \Phi(w^{-1})}p(\alpha ).$ 
Since the map $p$ is $W$-intertwining, these are exactly
the factors appearing on the left of \eqref{eq:lhs}.
\end{proof}

We now begin the proof of Theorem \ref{thm:long_word}.  First notice
that by Lemma $\ref{lemma:h_exchange},$ any composition of the
operators $\Dem_i$ can be written as a $\ffield_0$-linear combination
of the operators $w\in W.$ Hence we can write
\begin{equation}
  \label{eq:del_long}
  \Dem_{w_0}=\sum_{w\in W}R_w\cdot w,
\end{equation}
for some choice of rational functions $R_w\in \ffield_0.$ 

Let $l\colon W\rightarrow \Z$ denote the length function on $W$.  It
is a standard fact about finite Coxeter groups that for any $1\leq
j\leq r,$ we can find a reduced expression $\sigma _{i_1}\sigma
_{i_2}\ldots \sigma _{i_{l(w_0)}}$ for the longest word $w_{0}$ with
$i_1=j.$ By Proposition \ref{prop:demazbraid}, we have $\Dem
_{w_0}=\Dem _j \Dem _{w_j}$ for $w_j=\sigma _{i_2}\ldots \sigma
_{i_{l(w_0)}}$.  Since $\Dem_{j}^{2}=\Dem_{j}$ (Proposition
\ref{prop:quadratic_properties}), we have
$\Dem_j\Dem_{w_0}=\Dem_{w_0}$.  In other words,
$$\frac{1-x^{m(\alpha_j)\alpha_j}\sigma_j}
{1-x^{m(\alpha_j)\alpha_j}}\Dem_{w_0}=\Dem_{w_0}. $$
It follows that $\sigma_j\Dem_{w_0}=\Dem_{w_0}.$ 
Now apply $\sigma_j$ to both sides of (\ref{eq:del_long}).  Since each
$R_w\in \ffield_0$, Lemma \ref{lemma:h_exchange} 
implies
\begin{equation}
  \label{eq:del_long2}
  \Dem_{w_0}=\sum_{w\in W}(\sigma_j.R_w)\cdot \sigma_jw.
\end{equation}
Comparing coefficients in \eqref{eq:del_long} and \eqref{eq:del_long2}
and using the fact that the elements of $W$ are linearly independent as
operators on $\ffield$, we obtain $\sigma_j .R_w=R_{\sigma_jw}$.  Thus 
\begin{equation}
  \label{eq:v_modify}
u.R_w=R_{uw} \hskip .5 cm \forall u,w\in W
\end{equation}

To finish the proof of Theorem \ref{thm:long_word}, it suffices to
compute $R_{w_0}$; the remaining coefficients can then be computed
using \eqref{eq:v_modify}. In fact we shall prove the following:
\begin{lemma}
  \label{lemma:Rw} For $w\in W$, we have 
$$R_w= 
\frac{\sgn(w)}{\Delta }\prod_{\alpha\in\Phi(w^{-1})} x^{m(\alpha)\alpha}.$$
\end{lemma}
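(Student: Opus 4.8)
The plan is to compute the single coefficient $R_{w_0}$ directly from the definition of $\Dem_{w_0}$ as a composite of Demazure operators, and then invoke the already-established transformation law \eqref{eq:v_modify} to obtain the general formula. Fix a reduced expression $w_0 = \sigma_{i_1}\sigma_{i_2}\cdots\sigma_{i_N}$, so that $\Dem_{w_0} = \Dem_{i_1}\Dem_{i_2}\cdots\Dem_{i_N}$ by Proposition \ref{prop:demazbraid}. Writing each factor as $\Dem_{i_j} = \dfrac{1 - x^{m(\alpha_{i_j})\alpha_{i_j}}\sigma_{i_j}}{1 - x^{m(\alpha_{i_j})\alpha_{i_j}}}$, expand the product. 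The term picking up the full word $w = \sigma_{i_1}\cdots\sigma_{i_N} = w_0$ comes from choosing the ``$\sigma$'' summand in each factor, and equals
\[
\frac{-x^{m(\alpha_{i_1})\alpha_{i_1}}\sigma_{i_1}}{1-x^{m(\alpha_{i_1})\alpha_{i_1}}}
\cdot
\frac{-x^{m(\alpha_{i_2})\alpha_{i_2}}\sigma_{i_2}}{1-x^{m(\alpha_{i_2})\alpha_{i_2}}}
\cdots
\frac{-x^{m(\alpha_{i_N})\alpha_{i_N}}\sigma_{i_N}}{1-x^{m(\alpha_{i_N})\alpha_{i_N}}}.
\]

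The key observation is that no other term in the expansion contributes to the coefficient of $w_0$: any term in which some factor contributes its ``$1$'' summand yields a Weyl group element of length strictly less than $N = l(w_0)$, hence distinct from $w_0$. (Here one uses that the $\sigma_i$'s, and more generally the elements of $W$, are linearly independent as operators on $\ffield$, which is already being used in the excerpt.) So $R_{w_0}$ is exactly the $\ffield_0$-coefficient of $w_0$ in the displayed product. To extract it, I would apply Corollary \ref{corollary:rearrangepolys} with the $W$-intertwining map $p$ defined by $p(\beta) = \dfrac{-x^{m(\beta)\beta}}{1-x^{m(\beta)\beta}}$; one checks $p$ is well-defined and $W$-intertwining because $m$ is a $W$-invariant function on $\Phi$ (roots in a single Weyl orbit have the same $Q$-value, hence the same $m$) and the permutation action satisfies $w.x^{m(\beta)\beta} = x^{m(w\beta)(w\beta)}$. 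Corollary \ref{corollary:rearrangepolys} then gives
\[
R_{w_0} = \prod_{\alpha\in\Phi(w_0^{-1})} \frac{-x^{m(\alpha)\alpha}}{1-x^{m(\alpha)\alpha}}.
\]

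It remains to rewrite this as $\dfrac{\sgn(w_0)}{\Delta}\prod_{\alpha\in\Phi(w_0^{-1})} x^{m(\alpha)\alpha}$. Since $w_0$ is the longest element, $\Phi(w_0^{-1}) = \Phi(w_0) = \Phi^+$, so the denominator $\prod_{\alpha\in\Phi^+}(1-x^{m(\alpha)\alpha})$ is precisely $\Delta$; the sign $(-1)^{|\Phi^+|} = (-1)^{l(w_0)} = \sgn(w_0)$ accounts for the numerators' signs. This establishes Lemma \ref{lemma:Rw} for $w = w_0$. For general $w$, apply \eqref{eq:v_modify}: choosing $u = ww_0^{-1}$ gives $R_w = u.R_{w_0} = (ww_0^{-1}).\left(\dfrac{\sgn(w_0)}{\Delta}\prod_{\alpha\in\Phi^+} x^{m(\alpha)\alpha}\right)$. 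Now $\Delta$ is $W$-invariant under the dot action (it is a product over all of $\Phi^+$ of terms $1 - vx^{m(\alpha)\alpha}$ with $v=1$ here, and $W$ permutes $\Phi$; one must check the $m$-weighted exponents behave correctly, which again follows from $W$-invariance of $m$ and of $\Delta_v$), and $\sgn(w_0)\sgn(ww_0^{-1}) = \sgn(w)$, so after simplification $R_w = \dfrac{\sgn(w)}{\Delta}\prod_{\alpha\in\Phi(w^{-1})} x^{m(\alpha)\alpha}$, where the last step identifies $(ww_0^{-1}).\prod_{\alpha\in\Phi^+}x^{m(\alpha)\alpha}$ with $\prod_{\alpha\in\Phi(w^{-1})}x^{m(\alpha)\alpha}$ via the standard fact that $w_0$ sends $\Phi^+$ bijectively to $\Phi^-$ and $\Phi(w^{-1}) = \Phi^+ \cap w\Phi^-$.

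I expect the main obstacle to be the bookkeeping in this final identification step — tracking how the weighted product $\prod x^{m(\alpha)\alpha}$ transforms under the dot action and confirming it lands on exactly $\prod_{\alpha\in\Phi(w^{-1})}x^{m(\alpha)\alpha}$ with the correct sign — together with the verification that $p$ is genuinely $W$-intertwining, i.e. that $m$ is constant on $W$-orbits in $\Phi$. Both are routine but need care; everything else is a direct consequence of Corollary \ref{corollary:rearrangepolys}, the quadratic relation $\Dem_j^2 = \Dem_j$, and \eqref{eq:v_modify}.
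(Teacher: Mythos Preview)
Your overall strategy matches the paper's exactly: compute $R_{w_0}$ by picking out the all-$\sigma$ term from a reduced expression and applying Corollary~\ref{corollary:rearrangepolys} to the $W$-intertwining map $p(\beta)=-x^{m(\beta)\beta}/(1-x^{m(\beta)\beta})$ (this is the paper's $-p_2$), then propagate to arbitrary $w$ via \eqref{eq:v_modify}. Your computation of $R_{w_0}$ is correct.

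The gap is in the passage from $R_{w_0}$ to $R_w$. You assert that $\Delta$ is $W$-invariant under the dot action, but it is not: already for a simple reflection, $\sigma_i.\Delta = \Delta\cdot\dfrac{1-x^{-m(\alpha_i)\alpha_i}}{1-x^{m(\alpha_i)\alpha_i}} = -x^{-m(\alpha_i)\alpha_i}\Delta$, since $\sigma_i$ fixes $\Phi^+\setminus\{\alpha_i\}$ but sends $\alpha_i$ to $-\alpha_i$. Consequently your separate identification $(ww_0^{-1}).\prod_{\alpha\in\Phi^+}x^{m(\alpha)\alpha}=\prod_{\alpha\in\Phi(w^{-1})}x^{m(\alpha)\alpha}$ is also false, and the appearance of $\sgn(ww_0^{-1})$ in your argument has no source. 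Both the sign and the correct reduction of the product come precisely from the non-invariance of $\Delta$.

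The paper avoids this by acting on the full fraction: writing $R_{w_0}=\sgn(w_0)\prod_{\alpha\in\Phi^+}\dfrac{x^{m(\alpha)\alpha}}{1-x^{m(\alpha)\alpha}}$ and using that $u$ replaces the index set $\Phi^+$ by $u(\Phi^+)=(\Phi^+\setminus\Phi(u^{-1}))\cup(-\Phi(u^{-1}))$. For each $\alpha\in\Phi(u^{-1})$ the factor becomes $\dfrac{x^{-m(\alpha)\alpha}}{1-x^{-m(\alpha)\alpha}}=\dfrac{-1}{1-x^{m(\alpha)\alpha}}$, so one recovers $u.R_{w_0}=R_{w_0}\,\sgn(u)\prod_{\alpha\in\Phi(u^{-1})}x^{-m(\alpha)\alpha}$. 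Taking $u=ww_0$ (so $\Phi(u^{-1})=\Phi^+\cap w(\Phi^+)$) and cancelling against $\prod_{\alpha\in\Phi^+}x^{m(\alpha)\alpha}$ then yields the stated formula. Your sketch can be repaired along exactly these lines; the point is that the ``bookkeeping'' you anticipated is not a side issue but the mechanism that produces both the sign and the restriction of the product to $\Phi(w^{-1})$.
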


\begin{proof}
  To start, assume the statement is true for $w=w_0$:
\begin{equation}
 \label{eq:Rw_0}
R_{w_0}=
\frac{\sgn(w_0)}{\Delta }\prod_{\alpha\in\Phi^+} x^{m(\alpha)\alpha}.
\end{equation}
 By \eqref{eq:v_modify}, we have
\begin{align*}
  R_{uw_0}=u.R_{w_0}&=u.\biggl(\sgn (w_0)\cdot 
\prod_{\alpha \in \Phi ^{+}}\frac{ x^{m(\alpha )\alpha }}
{1-x^{m(\alpha )\alpha }}\biggr) \\
&=\sgn (w_0)\cdot \prod _{\alpha \in u(\Phi^{+})}
\frac{x^{m(\alpha )\alpha }}{1-x^{m(\alpha )\alpha }}\\
&=R_{w_0}\sgn(u)\prod_{\alpha\in\Phi(u^{-1})}x^{-m(\alpha)\alpha}
\end{align*}
Let $u=ww_0.$  Then $\Phi(u^{-1})=\Phi^+\cap w(\Phi^+).$
Hence 
\begin{align*}
  R_{w}&=R_{w_0}\sgn(ww_0) \prod _{\alpha \in \Phi ^+ \cap w(\Phi
    ^{+})} x^{-m(\alpha )\alpha } \\
&= \frac{\sgn(w)}{\Delta}
\prod_{\alpha \in \Phi^+ } x^{m(\alpha )\alpha }
\prod _{\alpha \in \Phi ^+ \cap w(\Phi^{+})} x^{-m(\alpha )\alpha}\\
&= \frac{\sgn(w)}{\Delta}\prod_{\alpha \in \Phi(w^{-1})} x^{m(\alpha )\alpha}.
\end{align*}
Thus the proof will be complete if we show \eqref{eq:Rw_0}.

Begin by writing
\begin{equation*}
  \Dem_i=p_1(\alpha_i)-p_2(\alpha_i)\sigma_i,
\end{equation*}
where $p_1,p_2\colon \Phi\to \ffield _0$   are defined by
\begin{equation*}
  p_1(\beta)=\frac {1}{1-x^{m(\beta)\beta}},  \ \ \  
p_2(\beta)=\frac {x^{m(\beta)\beta}}{1-x^{m(\beta)\beta}}.
\end{equation*}
Given a reduced expression $w_0=\sigma _{i_1}\sigma _{i_2} \cdots
\sigma _{i_N}$, it is easy to see that
\[
R_{w_0}w_0=\sgn(w_0)p_2(\alpha _{i_1})\cdot \sigma _{i_1}\cdot p_2(\alpha _2)\cdot \sigma _{i_2} \cdots p_2(\alpha _{i_N})\cdot \sigma_{i_N}. 
\]
Since the map $p_2$ is readily shown to be $W$-intertwining, it
follows from the above equality and Corollary
\ref{corollary:rearrangepolys} that  
\[
R_{w_0} = \sgn(w)\prod_{\alpha\in\Phi^+}p_2(\alpha)=
\frac{\sgn(w_0)}{\Delta }\prod_{\alpha\in\Phi^+} x^{m(\alpha)\alpha}.
\]
This completes the proof of the lemma, and thus of Theorem
\ref{thm:long_word}.
\end{proof}

\section{Proof of Theorem \ref{thm:T_sum}} \label{sec:pf_T_sum} 

In this section we prove Theorem \ref{thm:T_sum}:
\begin{equation*}
  \tDelta \cdot \Dem _{w_0} =\sum _{w\in W} \T _w.
\end{equation*}
Let $\Ts=\sum _{w\in W} \T _w.$
  We begin with some lemmas.

\begin{lemma}\label{WW_eigenvalue}
 For any $1\leq i\leq r$ we have 
$$\T_i\cdot (\tDelta \Dem _{w_0}) =v\cdot (\tDelta \Dem _{w_0}) $$
\end{lemma}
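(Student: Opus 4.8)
The plan is to reduce the statement to one elementary computation, namely that of $\Dem_i(\tDelta)$, by exploiting the identity $\sigma_i\Dem_{w_0}=\Dem_{w_0}$ (an equality of operators on $\ffield$) that is established in the course of proving Theorem~\ref{thm:long_word}. Throughout I abbreviate $\alpha=\alpha_i$ and $m=m(\alpha_i)$, and I recall that $\tDelta=\prod_{\beta\in\Phi^+}(1-vx^{m(\beta)\beta})$ lies in $\ffield_0$.

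First I would show that $\Dem_i(\tDelta\,\Dem_{w_0})=\Dem_i(\tDelta)\cdot\Dem_{w_0}$ as operators on $\ffield$. Since $\tDelta\in\ffield_0$, Lemma~\ref{lemma:h_exchange} lets us pass $\sigma_i$ across $\tDelta$: for any $f\in\ffield$,
\[
\sigma_i\bigl(\tDelta\,\Dem_{w_0}(f)\bigr)=(\sigma_i.\tDelta)\cdot\sigma_i\bigl(\Dem_{w_0}(f)\bigr)=(\sigma_i.\tDelta)\cdot\Dem_{w_0}(f),
\]
where the last step uses $\sigma_i\Dem_{w_0}=\Dem_{w_0}$. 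Substituting into the definition \eqref{def:dem} of $\Dem_i$ immediately gives $\Dem_i\bigl(\tDelta\,\Dem_{w_0}(f)\bigr)=\bigl(\tDelta-x^{m\alpha}\sigma_i.\tDelta\bigr)(1-x^{m\alpha})^{-1}\Dem_{w_0}(f)=\Dem_i(\tDelta)\cdot\Dem_{w_0}(f)$, as claimed.

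Next I would compute $\Dem_i(\tDelta)$. Writing $\tDelta=(1-vx^{m\alpha})\cdot P$ with $P=\prod_{\beta\in\Phi^+\setminus\{\alpha\}}(1-vx^{m(\beta)\beta})$, and using that $\sigma_i$ permutes $\Phi^+\setminus\{\alpha\}$ while $Q$ — and hence $m$ — is $W$-invariant, one gets $\sigma_i.P=P$ and $\sigma_i.\tDelta=(1-vx^{-m\alpha})P$. A one-line cancellation, using $1-vx^{m\alpha}-x^{m\alpha}+v=(1+v)(1-x^{m\alpha})$, then yields $\Dem_i(\tDelta)=(1+v)P$. Finally, substituting into $\T_i=(1-vx^{m\alpha})\Dem_i-1$ gives
\[
\T_i\bigl(\tDelta\,\Dem_{w_0}\bigr)=\bigl[(1-vx^{m\alpha})(1+v)-(1-vx^{m\alpha})\bigr]P\cdot\Dem_{w_0}=v(1-vx^{m\alpha})P\cdot\Dem_{w_0}=v\cdot\tDelta\,\Dem_{w_0},
\]
which is the desired identity.

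The step that needs the most care is the first one, where the non-multiplicativity of the metaplectic $W$-action is in play: one may only move $\sigma_i$ past $\tDelta$ because $\tDelta\in\ffield_0$ (via Lemma~\ref{lemma:h_exchange}), and one must have the relation $\sigma_i\Dem_{w_0}=\Dem_{w_0}$ from Section~\ref{sec:pf_long} available. Once those two facts are in hand, the rest is routine Laurent-polynomial arithmetic, so I do not expect any genuine obstacle.
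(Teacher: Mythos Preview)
Your argument is correct but follows a genuinely different route from the paper's. The paper does not invoke $\sigma_i\Dem_{w_0}=\Dem_{w_0}$ here. Instead it first observes that $\Dem_i$ commutes, as an operator, with the $\sigma_i$-invariant factor $P=\tDelta/(1-vx^{m\alpha})$, then writes $\Dem_{w_0}=\Dem_i\Dem_{w_i}$ via a reduced word for $w_0$ beginning with $\sigma_i$, and finally combines the idempotency $\Dem_i^2=\Dem_i$ with Lemma~\ref{lemma:quadratic} (namely $\Dem_i x^{m\alpha}\Dem_i=-\Dem_i$) to obtain $\Dem_i(1-vx^{m\alpha})\Dem_i=(1+v)\Dem_i$, whence $(\T_i+1)\tDelta\Dem_{w_0}=(1+v)\tDelta\Dem_{w_0}$. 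Your approach trades Lemma~\ref{lemma:quadratic} for the operator identity $\sigma_i\Dem_{w_0}=\Dem_{w_0}$ already established in Section~\ref{sec:pf_long}, reducing everything to a short scalar computation in $\ffield_0$; this is a bit more direct and makes Lemma~\ref{lemma:quadratic} unnecessary for this step. One small caveat on notation: what you call ``$\Dem_i(\tDelta)$'' is really shorthand for $(\tDelta - x^{m\alpha}\,\sigma_i.\tDelta)/(1-x^{m\alpha})$, which is precisely the quantity you compute and use; strictly speaking, applying $\Dem_i$ to $\tDelta$ via \eqref{def:dem} involves the metaplectic $\sigma_i$, which need not coincide with $\sigma_i.$ on $\ffield_0$. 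This is harmless, since your chain of equalities never relies on that identification.
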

\begin{proof}
Since the simple reflection $\sigma _i$ permutes the elements of $\Phi
^+\setminus \{\alpha _i\},$ the operator $\Dem_i$ commutes with
$$\prod _{\beta \in \Phi ^+\setminus \{\alpha _i\}}(1-vx^{m(\beta )\beta })=\frac{\tDelta}{1-vx^{m(\alpha _i)\alpha _i}}.$$
Consequently,
\begin{equation}
  \label{eq:3a}
  (1-vx^{m(\alpha _i)\alpha _i})\Dem _i \tDelta =
\tDelta \Dem_i (1-vx^{m(\alpha _i)\alpha _i}).
\end{equation}
Take a reduced expression for the long element,
$w_0=\sigma_{i_1}\sigma_{i_2}\cdots \sigma_{i_N}$ satisfying $i_1=i.$
Thus $\Dem_{w_0}=\Dem_i\Dem_{w_i}$ for $w_i=\sigma_{i_2}\cdots
\sigma_{i_N}.$ Using this and (\ref{eq:3a}), 
\[
(\T_i+1)\cdot (\tDelta \Dem_{w_0}) = 
\tDelta\Dem_i (1-vx^{m(\alpha _i)\alpha _i})\Dem _i \Dem_{w_i}.
\]
The idempotency of $\Dem _i$ (Proposition
\ref{prop:quadratic_properties}) and Lemma \ref{lemma:quadratic} imply 
\[
\Dem_i (1-vx^{m(\alpha _i)\alpha _i})\Dem _i=(1+v)\Dem_i.
\]
Putting everything together, we conclude that $(\T_i+1)\cdot (\tDelta
\Dem_{w_0}) = (1+v)\cdot (\tDelta\Dem_{w_0}) .$
\end{proof}

\begin{lemma}\label{Ts_eigenvalue}
 For any $1\leq i\leq r$ we have 
$$\T_i\cdot \Ts =v\cdot \Ts .$$
\end{lemma}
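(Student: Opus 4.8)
The plan is to mimic the proof of Lemma \ref{WW_eigenvalue}, using the quadratic relation for $\T_i$ (Proposition \ref{prop:quadratic_properties}(ii)) in place of the idempotency of $\Dem_i$. First I would fix $i$ and split the sum $\Ts=\sum_{w\in W}\T_w$ according to whether a reduced expression for $w$ can be taken to begin with $\sigma_i$ or not; equivalently, partition $W$ into the pairs $\{w,\sigma_i w\}$ where $l(\sigma_i w)=l(w)+1$ for exactly one element of each pair. For such a pair, with $w$ the shorter element, a reduced expression for $\sigma_i w$ is $\sigma_i$ followed by a reduced expression for $w$, so $\T_{\sigma_i w}=\T_i\T_w$ by Proposition \ref{prop:demazbraid} (the braid relations let us define $\T_w$ independently of the reduced word). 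Hence $\T_i(\T_w+\T_{\sigma_i w})=\T_i\T_w+\T_i^2\T_w=(1+(v-1))\T_i\T_w+v\T_w=v\T_i\T_w+v\T_w=v(\T_{\sigma_i w}+\T_w)$, using $\T_i^2=(v-1)\T_i+v$. Summing this identity over a set of representatives for the pairs gives $\T_i\cdot\Ts=v\cdot\Ts$.

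The key steps, in order, are: (1) record the bijective pairing $w\leftrightarrow\sigma_i w$ on $W$ and the fact that exactly one member of each pair has a reduced expression starting with $\sigma_i$ — this is the standard exchange/deletion property of Coxeter groups; (2) use Proposition \ref{prop:demazbraid} to write $\T_{\sigma_i w}=\T_i\T_w$ when $l(\sigma_i w)>l(w)$; (3) apply the quadratic relation $\T_i^2=(v-1)\T_i+v$ to the sum $\T_w+\T_{\sigma_i w}$ over each pair; (4) sum over representatives to conclude. I expect the main obstacle — really the only point requiring care — to be step (1)/(2): being sure that for the longer element $\sigma_i w$ of a pair one genuinely gets a \emph{reduced} word by prepending $\sigma_i$, so that $\T_{\sigma_i w}$ as defined (via any reduced expression) equals $\T_i\T_w$. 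This is exactly the kind of Coxeter-group bookkeeping already invoked in Section \ref{sec:pf_long} (finding reduced expressions for $w_0$ starting with a prescribed $\sigma_j$), so it should go through cleanly; everything else is the one-line algebraic identity coming from the quadratic relation.

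There is an equivalent, perhaps slicker, route worth mentioning: rather than pairing, one can use that $\Ts$ and $\tDelta\Dem_{w_0}$ satisfy formally identical relations, and Lemma \ref{WW_eigenvalue} already establishes $\T_i\cdot(\tDelta\Dem_{w_0})=v\cdot(\tDelta\Dem_{w_0})$; but since the point of the whole section is to \emph{prove} $\Ts=\tDelta\Dem_{w_0}$, we cannot use that here, so the direct pairing argument above is the one I would write out.
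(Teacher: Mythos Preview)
Your proposal is correct and is essentially identical to the paper's proof: the paper partitions $W$ into $C_{1}=\{w:l(\sigma_{i}w)=l(w)-1\}$ and $C_{2}=\{w:l(\sigma_{i}w)=l(w)+1\}$, rewrites $\Ts=\sum_{w\in C_{2}}(\T_{i}\T_{w}+\T_{w})$ via the bijection $w\mapsto\sigma_{i}w$, and then applies the quadratic relation $\T_{i}^{2}=(v-1)\T_{i}+v$. Your pairing $\{w,\sigma_{i}w\}$ and pairwise computation is just a repackaging of the same argument.
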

\begin{proof}
Recall that $l\colon W\rightarrow \Z$ is the length function on $W$.
Then for any element $w\in W$ and any simple reflection $\sigma _i$,
we have $l(\sigma _iw)=l(w)\pm 1$.  Partition $W$ into $C_1\cup C_2$,
where
\begin{align*}
C_1&=\{w\in W : l(\sigma _iw)=l(w)-1\},\\
C_2&=\{w\in W : l(\sigma _iw)=l(w)+1\}.
\end{align*}
Then the map $w\mapsto \sigma_iw$ defines a bijection between $C_1$
and $C_2.$  We compute
\begin{align*}
\T_i \Ts&=\T_i\cdot \biggl(\sum_{w\in C_1}\T _w +\sum _{w\in C_2}
\T _{w}\biggr)\\
&=\T_i\cdot \biggl(\sum_{w\in C_2}\T_i\T_w +\sum _{w\in C_2}\T
  _{w}\biggr)\\
&=\sum_{w\in C_2}\T_i^2\T_w +  \sum_{w\in C_2}\T_i\T_{w}.
\end{align*}
The second sum above is simply $\sum_{w\in C_1}\T _w.$ In the first,
we use the quadratic relation  $\T_i^2=(v-1)\T_i +v$ of Proposition
\ref{prop:quadratic_properties} to write 
\begin{equation*}
  \sum_{w\in C_2}\T_i^2\T_w=(v-1)\sum_{w\in C_2}\T_i\T_w 
+v\sum_{w\in C_2}\T_w= (v-1)\sum_{w\in C_1}\T_w +v\sum_{w\in C_2}\T_w.
\end{equation*}
Thus $\T_i\cdot \Ts =v\cdot \Ts .$
\end{proof}

\begin{lemma}\label{coeff_rel_for_eigenvectors}
Let 
$$\RR =\sum _{w\in W} R_w\cdot w$$
be an operator on $\ffield$ that is a linear combination of the Weyl group
elements with coefficients $R_w\in \ffield_0.$ Assume $\RR$ is an
eigenclass for $\T_{i}$ with eigenvalue $v$:
\begin{equation*}\label{eq:eigencondition_R}
 \T _i\cdot \RR =v\cdot \RR.
\end{equation*}
Then for every $w\in W$, we have
\begin{equation*}\label{eq:coeffrel}
R_w=\frac{1-vx^{m(\alpha_i)\alpha_i}}{1-vx^{-m(\alpha_i)\alpha_i}}\cdot 
\sigma _i.R_{\sigma _iw}.
\end{equation*}

\end{lemma}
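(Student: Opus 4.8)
The plan is to unwind the eigenvalue condition $\T_i\cdot\RR=v\cdot\RR$ into a relation among the coefficients $R_w$ by writing everything out as a $\ffield_0$-linear combination of Weyl group elements and using the linear independence of the $w\in W$ as operators on $\ffield$. First I would recall from \eqref{def:dl} that $\T_i=(1-vx^{m(\alpha_i)\alpha_i})\Dem_i-1$, and from \eqref{def:dem} that $\Dem_i=\tfrac{1-x^{m(\alpha_i)\alpha_i}\sigma_i}{1-x^{m(\alpha_i)\alpha_i}}$. Applying $\T_i$ on the left to $\RR=\sum_w R_w\cdot w$, and pushing the operator $\sigma_i$ past each $R_w\in\ffield_0$ using Lemma \ref{lemma:h_exchange}, the composite $\sigma_i\cdot(R_w\cdot w)$ becomes $(\sigma_i.R_w)\cdot\sigma_i w$. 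So $\T_i\RR$ is again of the form $\sum_w (\text{something})\cdot w$, with rational-function coefficients in $\ffield_0$.

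The key step is the coefficient bookkeeping. Abbreviate $m=m(\alpha_i)$ and write $y=x^{m\alpha_i}$, noting $\sigma_i.y=y^{-1}$. Expanding,
\[
\T_i\RR=\left(\frac{1-vy}{1-y}-1\right)\sum_w R_w\cdot w
-\frac{(1-vy)y}{1-y}\sum_w (\sigma_i.R_w)\cdot \sigma_i w.
\]
Re-index the second sum by $w\mapsto\sigma_i w$, so that the coefficient of a fixed $w\in W$ on the left-hand side is
\[
\left(\frac{1-vy}{1-y}-1\right)R_w-\frac{(1-vy)y}{1-y}\,\sigma_i.R_{\sigma_i w}
=\frac{(1-v)y}{1-y}\,R_w-\frac{(1-vy)y}{1-y}\,\sigma_i.R_{\sigma_i w}.
\]
Setting this equal to $v\cdot R_w$ (the coefficient of $w$ in $v\RR$) and clearing the denominator $1-y$ gives a linear relation between $R_w$ and $\sigma_i.R_{\sigma_i w}$; solving for $R_w$ and simplifying the coefficient of $R_w$, which works out to $\frac{(1-v)y-v(1-y)}{1-y}=\frac{y-v}{1-y}$, one obtains
\[
R_w=\frac{(1-vy)y}{y-v}\,\sigma_i.R_{\sigma_i w}.
\]
It remains to check that $\frac{(1-vy)y}{y-v}$ equals $\frac{1-vy}{1-vy^{-1}}$, which is immediate after multiplying numerator and denominator of the latter by $y$. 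This yields exactly the asserted formula $R_w=\frac{1-vx^{m(\alpha_i)\alpha_i}}{1-vx^{-m(\alpha_i)\alpha_i}}\cdot\sigma_i.R_{\sigma_i w}$.

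The main obstacle is purely organizational rather than conceptual: one must be careful to apply Lemma \ref{lemma:h_exchange} correctly (the coefficients $R_w$ lie in $\ffield_0$, which is precisely what licenses the identity $\sigma_i(R_w\cdot w\,(f))=(\sigma_i.R_w)\cdot(\sigma_i w)(f)$), and to track the two occurrences of $y$ versus $y^{-1}$ coming from the action of $\sigma_i$ on $R_{\sigma_i w}$. After that, the comparison of coefficients is justified by the linear independence of the elements of $W$ as operators on $\ffield$, a fact already invoked in the proof of Theorem \ref{thm:long_word}, and the algebraic simplification of the scalar factor is routine.
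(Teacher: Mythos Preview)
Your proof is correct and follows essentially the same approach as the paper: both expand $\T_i$ as $q_1(\alpha_i)-q_2(\alpha_i)\sigma_i$ (in the paper's notation \eqref{def:q1q2}, which is exactly your $\frac{(1-v)y}{1-y}$ and $\frac{(1-vy)y}{1-y}$), push $\sigma_i$ past the coefficients via Lemma~\ref{lemma:h_exchange}, re-index, and compare coefficients using linear independence of the $w\in W$. The only difference is that you carry out the final algebraic simplification explicitly, whereas the paper simply says ``solving for $R_w$ completes the proof.''
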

\begin{proof}
The proof is a straightforward computation.  Begin with
\begin{equation*}
 \T_iR_w w= \left[ q_1(\alpha_i)-q_2(\alpha_i)\sigma_i \right]R_ww=
q_1(\alpha_i)R_ww-q_2(\alpha_i)(\sigma_i .R_w)\sigma_iw
\end{equation*}
with $q_1, q_2$ defined by
\begin{equation}\label{def:q1q2}
  q_1(\beta)=\frac {1-vx^{m(\beta)\beta}}{1-x^{m(\beta)\beta}}-1,
\mbox{\ \ \ \ \ and \ \ \  \ }  
q_2(\beta)=\frac {(1-vx^{m(\beta)\beta})x^{m(\beta)\beta}}
{1-x^{m(\beta)\beta}}.
\end{equation}
Summing over $w\in W$,
we get
\begin{equation*}
  \T_i\cdot \RR=\sum_{w\in W}\left[ q_1(\alpha_i)R_w-
q_2(\alpha_i)\sigma_i.R_{\sigma_i w} \right]w
\end{equation*}
But we also have $\T _i\cdot \RR =v\cdot \RR$, so comparing
coefficients yields
\begin{equation*}
   q_1(\alpha_i)R_w-q_2(\alpha_i)\sigma_i.R_{\sigma_i w} =
vR_w.
\end{equation*}
Solving for $R_w$ completes the proof.
\end{proof}

Lemma \ref{coeff_rel_for_eigenvectors} has the following easy and useful corollary.

\begin{corollary}\label{longword_eigenvect_determines}
 Let 
$$\RR =\sum _{w\in W} R_w\cdot w,\hskip .5 cm \St =\sum _{w\in W} S_w\cdot w$$
be two operators on $\ffield$ that are linear combinations of the
Weyl-group elements with coefficients $R_w,S_w\in \ffield_0.$ 
Assume that
\begin{equation*}\label{eq:eigencondition}
 \T _i\cdot \RR =v\cdot \RR,\hskip .5 cm \T_i\cdot \St =v\cdot \St
\end{equation*}
for every $i, 1\leq i\leq r$.  Assume further that we have
$R_{w_0}=S_{w_0}$ for the long element $w_0\in W$.  Then $\RR =\St $
as operators on $\A .$
\end{corollary}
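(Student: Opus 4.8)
The plan is to deduce Corollary~\ref{longword_eigenvect_determines} from Lemma~\ref{coeff_rel_for_eigenvectors} by a downward induction on the length of Weyl group elements. First I would observe that the hypotheses on $\RR$ and $\St$ are exactly those needed to invoke Lemma~\ref{coeff_rel_for_eigenvectors} for each simple reflection $\sigma_i$: for every $w\in W$ and every $i$ we get
\[
R_w=\frac{1-vx^{m(\alpha_i)\alpha_i}}{1-vx^{-m(\alpha_i)\alpha_i}}\cdot\sigma_i.R_{\sigma_i w},
\qquad
S_w=\frac{1-vx^{m(\alpha_i)\alpha_i}}{1-vx^{-m(\alpha_i)\alpha_i}}\cdot\sigma_i.S_{\sigma_i w}.
\]
Subtracting, if we set $D_w=R_w-S_w\in\ffield_0$, then $D_w=\frac{1-vx^{m(\alpha_i)\alpha_i}}{1-vx^{-m(\alpha_i)\alpha_i}}\cdot\sigma_i.D_{\sigma_i w}$ for all $w$ and all $i$. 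So it suffices to show that $D_{w_0}=0$ forces $D_w=0$ for all $w$.

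Next I would run the induction. The base case is $\ell(w)=\ell(w_0)$, i.e.\ $w=w_0$, where $D_{w_0}=0$ is the given hypothesis $R_{w_0}=S_{w_0}$. For the inductive step, suppose $\ell(w)<\ell(w_0)$ and that $D_{w'}=0$ for every $w'$ with $\ell(w')>\ell(w)$. Since $w\neq w_0$, there is a simple reflection $\sigma_i$ with $\ell(\sigma_i w)=\ell(w)+1>\ell(w)$; this is the standard fact that a non-longest element can always be lengthened by some simple reflection on the left. Applying the displayed relation with this $i$ gives $D_w=\frac{1-vx^{m(\alpha_i)\alpha_i}}{1-vx^{-m(\alpha_i)\alpha_i}}\cdot\sigma_i.D_{\sigma_i w}$, and by the inductive hypothesis $D_{\sigma_i w}=0$, hence $D_w=0$. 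By induction $D_w=0$ for all $w\in W$, i.e.\ $R_w=S_w$ for all $w$, and therefore $\RR=\St$ as operators on $\ffield$ (equivalently on $\A$, since the $w\in W$ are linearly independent as operators).

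The only subtlety — and it is minor — is making sure the rational function $\frac{1-vx^{m(\alpha_i)\alpha_i}}{1-vx^{-m(\alpha_i)\alpha_i}}$ is a genuine nonzero element of $\ffield$ so that $D_{\sigma_i w}=0$ really does imply $D_w=0$; this is immediate since both numerator and denominator are nonzero in $\ffield$. I do not expect any real obstacle here: once Lemma~\ref{coeff_rel_for_eigenvectors} is in hand, the corollary is a formal consequence of the fact that $W$ is finite and every element other than $w_0$ admits a length-increasing left multiplication by a simple reflection, which powers the downward induction.
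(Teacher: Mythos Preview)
Your proof is correct and is essentially identical to the paper's own argument: the paper also performs a descending induction on length, using Lemma~\ref{coeff_rel_for_eigenvectors} at the inductive step to pass from $R_{\sigma_i w}=S_{\sigma_i w}$ to $R_w=S_w$ whenever $l(\sigma_i w)=l(w)+1$. Your introduction of $D_w=R_w-S_w$ is a cosmetic repackaging of the same reasoning.
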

\begin{proof}
 We show that $R_w=S_w$ for every $w\in W.$ This can be seen by descending induction on the length of $w.$ For $l(w)$ maximal we have $R_{w_0}=S_{w_0}$ by assumption. Now assume $l(\sigma _iw)=l(w)+1,$ and $R_{\sigma _iw}=S_{\sigma _iw}.$ It follows from Lemma \ref{coeff_rel_for_eigenvectors} that 
$$R_w=\frac{1-vx^{m(\alpha _i)\alpha _i}}
{1-vx^{-m(\alpha _i)\alpha _i}}\cdot \sigma_i.R_{\sigma_iw}=
\frac{1-vx^{m(\alpha _i)\alpha _i}}
{1-vx^{-m(\alpha _i)\alpha _i}}\cdot \sigma_i.S_{\sigma_iw}=S_w,$$
thus $R_w=S_w.$ This completes the proof. 
\end{proof}

We now turn to the proof of Theorem \ref{thm:T_sum}. 
Applying Lemmas \ref{WW_eigenvalue} and \ref{Ts_eigenvalue} to the operators $\tDelta
\Dem _{w_0}$ and $\Ts$, we have 
$$\T_i\cdot (\tDelta \Dem _{w_0}) =v\cdot \tDelta \Dem _{w_0} ,
\hskip .5 cm \T_i\cdot \Ts =v\cdot \Ts $$ for every $1\leq i\leq r.$
It follows from the definitions that as operators on $\ffield,$ both
$\tDelta \Dem _{w_0}$ and $\Ts$ can be written as a linear combination
of elements of $W$ with coefficients in $\ffield_0.$ Let us write
$$\tDelta \Dem _{w_0}=\sum _{w\in W} R_w\cdot w
\mbox{\ \ \ \ \ and\ \ \ \ \ }\Ts=\sum _{w\in W} S_w\cdot w$$ for some
$ R_w, S_w\in \ffield.$ We shall show that if $w_0\in W$ is the long
element of the Weyl group, then $R_{w_0}=S_{w_0}.$ By Corollary
\ref{longword_eigenvect_determines}, this suffices to prove the
theorem.

The long coefficient $R_{w_0}$ of $\tDelta \Dem _{w_0}$ is easily read
off from Theorem \ref{thm:long_word}:
\begin{equation}\label{eq:R_{w_0}formula}
 R_{w_0}=\sgn(w_0)\cdot \prod_{\alpha \in \Phi ^+} 
\frac{\left(1-v\cdot x^{m(\alpha )\alpha }\right)\cdot 
x^{m(\alpha )\alpha }}{\left(1-x^{m(\alpha )\alpha }\right)}.
\end{equation}

To determine the coefficient $S_{w_0}$ we again use the property of
$W$-intertwining maps from Corollary \ref{corollary:rearrangepolys} and
argue as in the proof of Lemma \ref{lemma:Rw}.  First, note that the
only term in $\Ts =\sum _{w\in W} T_w=\sum _{w\in W} S_w\cdot w$ that
contributes to the coefficient $S_{w_0}$ is $T_{w_0}.$ (All the other
$T_w$ have fewer than $l(w_0)$ simple reflections appearing in them.)
To examine $T_{w_0},$ fix a reduced expression for the long element: 
$w_0=\sigma _{i_1}\sigma _{i_2} \cdots
\sigma _{i_N}$. Let us again write
\begin{equation*}
  \T_i=q_1(\alpha_i)-q_2(\alpha_i)\sigma_i
\end{equation*}
where $q_1,q_2\colon \Phi\to \ffield _0$ are defined in (\ref{def:q1q2}).
It is clear that the map $q_2$ is $W$-intertwining.  The only
contribution to $S_{w_0}$ from $T_{w_0}=\T _{i_1}\T_{i_2}\cdots
\T_{i_N}$ is from
\[
q_2(\alpha_{i_1})\sigma _{i_1}\cdot q_{2}(\alpha_{i_2})\sigma _{i_2}\cdots 
q_{2}(\alpha_{i_N})\sigma _{i_N}.
\]
Using  Corollary \ref{corollary:rearrangepolys}, we conclude
\begin{equation}
  \label{eq:S_{w_0}formula}
S_{w_0}=\sgn(w_0)\cdot \prod _{\alpha \in \Phi ^+}q_{2}(\alpha) 
=\sgn (w_0)\cdot \prod _{\alpha \in \Phi ^+} 
\frac{\left(1-v\cdot x^{m(\alpha )\alpha }\right)\cdot 
x^{m(\alpha )\alpha }}
{1-x^{m(\alpha )\alpha }}.
\end{equation}
Comparing \eqref{eq:R_{w_0}formula} and \eqref{eq:S_{w_0}formula} we
see that indeed $R_{w_0}=S_{w_0}$, as desired.  This completes the
proof of Theorem \ref{thm:T_sum}.

\section{Whittaker functions}\label{sec:whittaker}

We conclude this paper by showing how to compute Whittaker functions on
certain metaplectic groups using the Demazure and Demazure-Lusztig operators.
This follows from results in \cite[Section 15]{mcnamara2}, which
relate Whittaker functions to the local factors of Weyl group multiple
Dirichlet series constructed in \cite{cg-jams}.  Since these factors
can be constructed using Theorems \ref{thm:long_word} or
\ref{thm:T_sum}, we obtain an alternative description of the Whittaker
functions in the spirit of Demazure's character formula.

Before we can define the Whittaker function of interest, we must
introduce notation and quickly recall the construction of unramified
principal series presentations on metaplectic groups.  Our
presentation is taken from \cite{mcnamara, mcnamara2}, and the reader
should look there for more details.

Let $F$ be a local field containing the $n^{th}$ roots of unity,
$\mu_n$.  We choose once and for all an identification of $\mu_n$ with
the complex $n^{th}$ roots of unity.  Let $\OO$ denote the ring of
integers and $\p$ the maximal ideal of $\OO$ with uniformizer
$\varpi$.  Let $q$ denote the order of the residue field $\OO/\p.$ We
assume that $q\equiv 1 \mod 2n$, so that in fact $F$ contains the $2n$-th
roots of unity.  

In order to define {\em Gauss sums}, we introduce $\psi_F$ be an
additive character on $F$ with conductor $\OO$.  Further let 
$(\,,\,)=(\,,\,)_{F,n}:F^\times \times F^\times \to \mu_n(F)$ be the
$n$th order Hilbert symbol. It is a bilinear form on $F^\times$ that
defines a nondegenerate bilinear form on $F^\times/F^{\times n}$ and satisfies
\[
(x,-x)=(x,y)(y,x)=1,\,x,\,y \in F^\times.
\]
Our assumption that $-1$ is an $n^{th}$ root of unity further implies
that $(\varpi, -1)=1$.
Then we define 
\begin{equation}
  \label{def:gauss_sums}
  g_i=\sum_{u \in \OO^\times/(1+\p)}
    (u,\varpi^i)\psi_F(-\varpi^{-1}u).
\end{equation}
In particular $g_i$ depends only on the residue class of $i$ mod $n$,
$g_0=-1$ and $g_ig_{n-i}=q$.

Now let $G$ be a connected reductive group over $F$.  We assume that
$G$ is split and unramified and arises as the special fiber of a
group scheme $\bG$ defined over $\Z $.  Let $K = \bG(\OO)$ be a
maximal compact subgroup.  Let $T$ be a maximal split torus and let $\Lambda
$ be its group of cocharacters.  Let $B$ be a Borel subgroup
containing $T$, let $U$ be the unipotent radical of $B$, and let
$U^{-}$ be the opposite subgroup to $U$.  Let $\Phi$ be the roots of
$T$ in $G$, and let $\Delta \subset \Phi$ be the simple roots.  The
Weyl group $W$ of $\Phi$ acts on $\Lambda $, and as in Section
\ref{sec:notation} we fix a $W$-invariant integer-valued quadratic form
on $\Lambda$, and use it to define the sublattice $\Lambda_{0}\subset
\Lambda$ as in \eqref{eqn:defoflambda0}.

Let $\tilde G$ be an $n$-fold metaplectic cover of $G$, as defined in
\cite[Section 2]{mcnamara2}.  Thus we have an exact sequence
\begin{equation}\label{eq:cover}
1\rightarrow \mu_{n} \rightarrow \tilde{G} \rightarrow G \rightarrow 1,
\end{equation}
where $\mu_{n}$ is the group of $n$th roots of unity. We choose an
identification of $\mu_n$ with the complex $n^{th}$ roots of unity.
Denote the inverse image of any subgroup $J\subset G$ with a tilde:
$\tilde{J}$.  It is known that \eqref{eq:cover} splits canonically
over $U$ and $U^{-}$.  In general \eqref{eq:cover} does not split over
$K$, but our assumption on $q$ implies that it does.  We therefore fix
a splitting $\tilde{K}\simeq \mu_{n}\times K$ and identify $K$ with
its image in $\tilde{G}$.  Let $H$ be the centralizer in $\tilde{T}$
of $T\cap K$.  The lattice $\Lambda$ (respectively $\Lambda_{0}$) can
be identified with $\tilde{T}/ (\mu_{n}\times (T\cap K))$ (resp., $H/
(\mu_{n}\times (H\cap K))$).  Our assumptions on $G$ imply that $H$ is
abelian, and in fact $H/ (T\cap K) \simeq \mu_{n}\times \Lambda_{0}$
(although not canonically).  Moreover, we may choose a lift of
$\Lambda $ into $\tilde{G}$; we denote this lift by $\lambda \mapsto
\varpi^{\lambda}$.

The unramified principal series representations are parametrized by
complex-valued characters $\chi$ of $\Lambda_{0}$.  Given such a
character, we obtain a character of $H$ using the surjection $H
\rightarrow \mu_{n}\times \Lambda_{0}$, where we let the roots of
unity act faithfully.  We induce this character to $\tilde{T}$ and
obtain a representation $(\pi_{\chi}, i (\chi))$.  The \emph{unramfied
principal series} representation $(\pi_{\chi}, I (\chi))$ is formed
using normalized induction of this representation to $\tilde{G}$.
More precisely, we have
\[
I (\chi) = \{f\colon \tilde{G}\rightarrow i (\chi) : f
(bg)=\delta^{1/2} (b)\pi_{\chi} (b)f (g), b\in \tilde{B}, g\in
\tilde{G}, \text{$f$ locally constant} \},
\]
where $\delta$ is the modular quasicharacter of $\tilde{B}$, and where
$\tilde{G}$ acts on $I (\chi)$ by right translation.
One proves that $I (\chi)^{K}$ is one-dimensional; a nonzero element
$\phi_{K}$ in this space of invariants is called a \emph{spherical
vector}. 

Let $\psi \colon U^{-}\rightarrow \C$ be an unramified character.  By
definition this means that the restriction of $\psi$ to each of the
root subgroups $U_{-\alpha }$, $\alpha \in \Delta$ is a character of
$U_{-\alpha} \simeq F$ with conductor $\OO$.
Then the function $\tilde{G}\rightarrow i (\chi )$ defined by
\begin{equation}\label{eq:ichiwhit}
g\longmapsto \int_{U^{-}}\phi_{K} (ug)\psi (u)du
\end{equation}
is the \emph{$i (\chi)$-valued Whittaker function with character
$\psi$}.  We will obtain a complex-valued Whittaker function by
applying a linear functional $\xi \in i (\chi)^{*}$ to the right of
\eqref{eq:ichiwhit}. We now explain how to construct certain
functionals so that we can arrive at a very explict formula.  To do
this we must be very careful about normalizations.

Recall that $\phi_{K}\in I (\chi)^{K}$ is our spherical vector.  It
turns out that we have an isomorphism $I (\chi)^{K} \simeq i
(\chi)^{\tilde{T}\cap K}$ given by $f\mapsto f (1)$.  Let
$v_{0}=\phi_{K} (1)$.  Let $A$ be a set of coset representatives for
$\tilde{T}/H$; our assumptions imply that we can assume they each have
the form $\varpi^{\lambda}$ for some $\lambda \in \Lambda$.  The
vectors $\{\pi_{\chi} (a)v_{0} : a\in A\}$ give a basis of $i
(\chi)$. 

Now let $\tilde{\chi}\colon \tilde{T}\rightarrow  \C$ be an extension
of $\chi$ to $\tilde{T}$ satisfying $\tilde{\chi} (th) = \tilde{\chi}
(t)\chi (h)$ for all $t\in \tilde{T}, h\in H$.   Such an extension
determines a functional $\xi _{\tilde{\chi}} \in i (\chi)^{*}$ by 
\[
\xi _{\tilde{\chi}} (\pi_{\chi} (a)v_{0}) = \tilde{\chi} (a). 
\]
Since each $a$ has the form $\varpi^{\lambda}$ for $\lambda \in
\Lambda$, we may write instead $\tilde{\chi} (\lambda)$ for
$\tilde{\chi} (a)$.  Then the complex-valued Whittaker function we want to compute is
\begin{equation}\label{eq:Cwhit}
\W = \W_{\tilde{\chi}} : g \longmapsto \xi_{\tilde{\chi}}\Bigl(
\int_{U^{-}}\phi_{K} (ug)\psi (u)du\Bigr).
\end{equation}
The fact that  $\W$ satisfies 
 \[
\W (\zeta ugk) = \zeta \psi (u)\W (g), \quad \zeta \in \mu_{n}, u\in
U, g \in \tilde{G}, k\in K
\]
together with the Iwasawa decomposition $G=UTK$ implies that it
suffices to compute $\W$ on $\tilde{T}$.

We are almost ready to evaluate $\W$ on $\tilde{T}$ in terms of our
Demazure operators.  Set $v=q^{-1}$ in the group action
(\ref{def:action}) and interpret the Weyl group action on $\Lambda$ as
acting on $\tilde{\chi}$ via the identification
$\tilde\chi(\varpi^\lambda)=x^\lambda$.  Further define
\begin{equation}
  \label{eq:cw0}
  c_{w_0}(x)=\frac{\prod_{\alpha\in
      \Phi^+}(1-q^{-1}x^{m(\alpha)\alpha})}
{\prod_{\alpha\in \Phi^+}(1-x^{m(\alpha)\alpha})}
\end{equation}
Then we have the following formula of McNamara:

\begin{theorem}
  \label{thm:mcnamara}\cite[Theorem 15.2]{mcnamara2}  
Let $\lambda$ be a dominant
  coweight.   Then  
\[
(\delta^{-1/2}\W_{\tilde{\chi}}) (\varpi^{\lambda}) = 
 c_{w_0}(x)
\sum_{w\in W} \sgn(w)
\prod_{\alpha \in \Phi(w^{-1})} x^{m(\alpha )\alpha }
w(x^{w_0\lambda}),
\]
where $w$ acts on $x^{\lambda}$ as in \eqref{def:action}.
\end{theorem}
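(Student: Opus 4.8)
The statement is essentially McNamara's \cite[Theorem~15.2]{mcnamara2}, so the ``proof'' I have in mind is a deduction from his work combined with the bookkeeping of Section~\ref{sec:notation}. First I would recall the structure of McNamara's computation. The Whittaker function is by definition the Jacquet integral $\xi_{\tilde\chi}\!\left(\int_{U^-}\phi_K(u\varpi^\lambda)\psi(u)\,du\right)$; in \cite[\S15]{mcnamara2} one decomposes $U^-$ into the disjoint cells indexed by the combinatorial data underlying the crystal $B(-\infty)$, notes that $\phi_K$ is constant on each cell, sums the resulting contributions, and recognizes the total as the $\lambda$-th coefficient of the Weyl group multiple Dirichlet series attached to $\chi$ in \cite{cg-jams}. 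Isolating this identity --- that the Jacquet integral equals a combinatorial sum over $U^-$, and hence the Chinta--Gunnells average --- is the substantive part, and it is carried out in \cite{mcnamara2}; I would simply invoke it.

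What then remains is to rewrite that multiple Dirichlet series coefficient in the form on the right of the theorem. By construction in \cite{cg-jams}, the (normalized) coefficient is produced by applying the sign-twisted metaplectic Weyl average to the highest-weight monomial; after clearing the deformed Weyl denominator it is precisely $c_{w_0}(x)\sum_{w\in W}\sgn(w)\prod_{\alpha\in\Phi(w^{-1})}x^{m(\alpha)\alpha}\,w(x^{w_0\lambda})$, with $w$ acting via \eqref{def:action}. The care needed here is purely a matter of conventions: the use of the long element so that the numerator lies in $\polyring$ rather than merely in $\ffield$, the switch from $\Phi(w)$ to $\Phi(w^{-1})$ (Proposition~\ref{prop:phi(w)description}), and --- most delicate --- matching McNamara's normalizations of $\delta^{1/2}$, of the spherical vector $\phi_K$, and of the functional $\xi_{\tilde\chi}$ with the ones used in \cite{cg-jams}. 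I expect this normalization reconciliation to be the only genuine obstacle on our side.

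Finally, as a consistency check and as the bridge to the Demazure-operator picture (and to Theorem~\ref{thm:demazure_whittaker}), I would observe that the right-hand side is nothing but $\tDelta\cdot\Dem_{w_0}(x^{w_0\lambda})$. Indeed, Theorem~\ref{thm:long_word} identifies the operator $\sum_{w\in W}\sgn(w)\prod_{\alpha\in\Phi(w^{-1})}x^{m(\alpha)\alpha}\,w$ with $\Delta\cdot\Dem_{w_0}$, and with $v=q^{-1}$ one has $c_{w_0}(x)=\tDelta/\Delta$, so $c_{w_0}(x)\cdot\Delta\cdot\Dem_{w_0}=\tDelta\cdot\Dem_{w_0}$; by Theorem~\ref{thm:T_sum} this equals $\sum_{w\in W}\T_w$ as operators, so the formula may equally be read as $(\delta^{-1/2}\W_{\tilde\chi})(\varpi^\lambda)=\sum_{w\in W}\T_w(x^{w_0\lambda})$. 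Thus the analytic input is entirely McNamara's, and the contribution here is the translation into the language of the metaplectic Demazure and Demazure--Lusztig operators.
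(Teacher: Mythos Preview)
Your proposal is correct and matches the paper's treatment: the paper does not prove this theorem at all but simply cites it as McNamara's \cite[Theorem~15.2]{mcnamara2}, adding only the one-line remark that his statement is phrased in terms of the slightly different Weyl group action from \cite{cg-jams} and that relating the two actions yields the displayed formula. One small inaccuracy: the decomposition of $U^-$ into cells indexed by $B(-\infty)$ that you describe is the method of \cite{mcnamara}, not of \cite{mcnamara2}; the latter proceeds via intertwining operators in the Casselman--Shalika style, so your sketch of ``McNamara's computation'' conflates the two papers, though this does not affect the validity of the citation.
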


Actually \cite[Theorem 15.2]{mcnamara2} is written in terms of a
slightly different group action introduced in \cite{cg-jams}, but
relating the two actions leads to the statement above.  Combining the
previous result with Theorems \ref{thm:long_word} and \ref{thm:T_sum}
we arrive at our objective of expressing the Whittaker function in
terms of the Demazure and Demazure-Lusztig operators.

\begin{theorem}\label{thm:demazure_whittaker}
  For $\lambda$ a dominant coweight, 
  \begin{align*}
    (\delta^{-1/2}\W_{\tilde{\chi}}) (\varpi^{\lambda}) &= 
\prod_{\alpha\in \Phi^+}(1-q^{-1}x^{m(\alpha)\alpha})
\Dem _{w_0}(x^{w_0\lambda})\\
&= \sum_{w\in W} \T_w(x^{w_0\lambda}).
  \end{align*}
\end{theorem}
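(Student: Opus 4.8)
The plan is to deduce both equalities directly from McNamara's formula (Theorem \ref{thm:mcnamara}) together with the operator identities of Theorems \ref{thm:long_word} and \ref{thm:T_sum}, once the parameter specialization $v=q^{-1}$ has been inserted. First I would observe that with $v=q^{-1}$ the deformed denominator $\tDelta=\prod_{\alpha\in\Phi^+}(1-v\,x^{m(\alpha)\alpha})$ becomes exactly the numerator of $c_{w_0}(x)$ from \eqref{eq:cw0}, while $\Delta$ (that is, $\Delta_v$ at $v=1$) is its denominator; hence $c_{w_0}(x)=\tDelta/\Delta$, and McNamara's formula can be rewritten as
\[
(\delta^{-1/2}\W_{\tilde{\chi}})(\varpi^{\lambda}) = \tDelta\cdot\frac{1}{\Delta}\sum_{w\in W}\sgn(w)\prod_{\alpha\in\Phi(w^{-1})}x^{m(\alpha)\alpha}\,w(x^{w_0\lambda}).
\]

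Next, Theorem \ref{thm:long_word} identifies the operator $\tfrac{1}{\Delta}\sum_{w\in W}\sgn(w)\prod_{\alpha\in\Phi(w^{-1})}x^{m(\alpha)\alpha}\,w$ with $\Dem_{w_0}$; evaluating on $x^{w_0\lambda}$ and substituting into the display yields the first asserted equality,
\[
(\delta^{-1/2}\W_{\tilde{\chi}})(\varpi^{\lambda}) = \tDelta\cdot\Dem_{w_0}(x^{w_0\lambda}) = \prod_{\alpha\in\Phi^+}(1-q^{-1}x^{m(\alpha)\alpha})\,\Dem_{w_0}(x^{w_0\lambda}).
\]
For the second equality I would apply Theorem \ref{thm:T_sum}, namely $\tDelta\cdot\Dem_{w_0}=\sum_{w\in W}\T_w$ as operators on $\ffield$, to the element $x^{w_0\lambda}$, obtaining $\tDelta\cdot\Dem_{w_0}(x^{w_0\lambda})=\sum_{w\in W}\T_w(x^{w_0\lambda})$. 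Chaining the two steps completes the proof.

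The one place requiring care — and the closest thing to an obstacle — is the compatibility of conventions: Theorem \ref{thm:mcnamara} is phrased using the Weyl-group action of \cite{cg-jams}, whereas Theorems \ref{thm:long_word} and \ref{thm:T_sum} use the action \eqref{def:action}, and the identification $\tilde\chi(\varpi^\lambda)=x^\lambda$ must be combined with the substitution $v=q^{-1}$ so that the symbol $w(x^{w_0\lambda})$ denotes the same thing in all three statements. As indicated in the paragraph following Theorem \ref{thm:mcnamara}, the two actions are intertwined by an explicit change of variables; making that translation precise is the main (and essentially only) bookkeeping step, after which the argument is a two-line substitution with no further combinatorial or analytic content.
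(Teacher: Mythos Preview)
Your proposal is correct and follows exactly the approach the paper takes: the theorem is stated as an immediate corollary of McNamara's formula (Theorem \ref{thm:mcnamara}) combined with the operator identities of Theorems \ref{thm:long_word} and \ref{thm:T_sum}, after noting that $c_{w_0}(x)=\tDelta/\Delta$ under the specialization $v=q^{-1}$. Your remark about the change of conventions between the action of \cite{cg-jams} and the action \eqref{def:action} is also precisely the caveat the paper flags just before stating the theorem.
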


\def\cprime{$'$}
\providecommand{\bysame}{\leavevmode\hbox to3em{\hrulefill}\thinspace}
\providecommand{\MR}{\relax\ifhmode\unskip\space\fi MR }
% \MRhref is called by the amsart/book/proc definition of \MR.
\providecommand{\MRhref}[2]{%
  \href{http://www.ams.org/mathscinet-getitem?mr=#1}{#2}
}
\providecommand{\href}[2]{#2}

\end{document}